\title{Bibliography management: \texttt{biblatex} package}
\author{Share\LaTeX}
\date{ }
\newtheorem{theorem}{Theorem}[section]
\newtheorem{lemma}[theorem]{Lemma}
\newtheorem{corollary}[theorem]{Corollary}
\numberwithin{equation}{section}
\theoremstyle{definition}
\newtheorem{remark}[theorem]{Remark}
\newcommand{\R}{\operatorname{\mathbb{R}}}
\newcommand{\C}{\operatorname{\mathbb{C}}}
\newcommand{\re}{\operatorname{Re}}
\newcommand{\im}{\operatorname{Im}}
\newcommand{\z}{\operatorname{\zeta_K}}
\pgfplotsset{compat=1.18}
\begin{document}




\pagenumbering{arabic}













\begin{center}
\vspace{5em}
{\Large\textsc{estimating the number of zeros of dedekind zeta-functions}}\\
\vspace{1em}
{\large\textsc{Victor Simon Gerard Amberger}}
\end{center}
\begin{abstract}
	\textsc{Abstract.} In this article, I derive a new approach to estimate the number of non-trivial zeros of a given Dedekind zeta function with absolute height at most $T\geq1$ counted with multiplicity. The error term in corresponding asymptotic formula improves all previous results, even in the case of the Riemann zeta function. 
\end{abstract}
\section{Introduction}
In this article, we consider Dedekind zeta functions associated with a given algebraic number field. Let $K$ be an algebraic number field of degree $n_K$ then the associated Dedekind zeta-function is given by 
\begin{equation}\label{dedekind zeta}
    \z(s)=\sum_{I\subset\mathcal{O}_K\setminus0}N_K(I)^{-s},
\end{equation}
for $\re(s)>1$ \cite[Definition 5.1]{bib7}, where the sum is taken over all non-zero integer ideals of $K$. Similarly to how the Riemann zeta function is related to the distribution of primes, the Dedekind zeta functions are closely related to the distribution of prime ideals. 
Furthermore, the Dedekind zeta function has properties very similar to the Riemann zeta function, in that the definition in \eqref{dedekind zeta} allows an analytic continuation to the entire complex plane which has a simple pole at $1$ and satisfies a functional equation. We have
\begin{equation}\label{completed dedekind zeta}
    \xi_K(s):=s(s-1)d_K^{s/2}\gamma_K(s)\z(s),
\end{equation}
where
\begin{equation}\label{gamma faktor K}
    \gamma_K(s)=\left(\pi^{-\frac{s+1}{2}}\Gamma\left(\frac{s+1}{2}\right)\right)^{r_2}\left(\pi^{-\frac{s}{2}}\Gamma\left(\frac{s}{2}\right)\right)^{r_1+r_2}.
\end{equation}
Here, $d_K$ denotes the absolute discriminant of the number field $K$. Furthermore, $r_1$ and $r_2$ denote the number of real and complex places of $K/\mathbb{Q}$. So, in particular, we have $n_K=r_1+2r_2$. Due to computational advantages, we will also use this representation for the gamma factor
\begin{equation}\label{gamma faktor K legendre}
    \gamma_K(s)=\gamma_1(s)^{r_1}\gamma_2(s)^{r_2}
\end{equation}
with
\begin{equation*}
    \gamma_1(s):=\pi^{-\frac{s}{2}}\Gamma\left(\frac{s}{2}\right)\ \text{ and }\ \gamma_2(s):=\pi^{-s}2^{1-s}\Gamma(s),
\end{equation*}
which we may obtain from equation \eqref{gamma faktor K} by Legendre's duplication formula \cite[Corollary 12.1.3]{bib9} for the Gamma function.
The completed Dedekind zeta function satisfies the functional equation \cite[Corollary 5.10]{bib7} as displayed below,
\begin{equation}\label{Functional equation}
    \xi_K(s)=\xi_K(1-s).
\end{equation}
In this context, we strive to study the distribution of the zeros of $\z(s)$. From the Euler product for the Dedekind zeta function $\z(s)$ \cite[Proposition 5.2]{bib7}
\begin{equation}\label{Euler product}
    \z(s)=\prod_{\mathfrak{p}\subset\mathcal{O}_K}\left(1-N(\mathfrak{p})^{-s}\right)^{-1}
\end{equation}
we can see that $\z(s)$ and, by extension $\xi_K(s)$ have no zeros with $\re(s)>1$ and by the functional equation we can tell that all zeros of $\xi_K(s)$ must be located inside the critical strip with $\re(s)\in[0,1]$. We call these the non-trivial zeros of $\z(s)$ and throughout this paper we will denote them by $\rho=\beta+i\omega$, where $\beta$ denotes the real part of the zero and $\omega$ denotes the imaginary component. In this article, we investigate the horizontal distribution of the non-trivial zeros. Our goal is to estimate the quantity $N_K(T)$ given by
\begin{align*}
    N_K(T):=&\#\left\{\rho\in\C \mid \z(\rho)=0,\ 0\leq\beta\leq1,\ |\omega|\leq T\right\}'\\
    =&\ \ \ \frac{1}{2}\#\left\{\rho\in\C\mid \z(\rho)=0,\ 0\leq\beta\leq1,\ |\omega|\leq T\right\}\\
    &+\frac{1}{2}\#\left\{\rho\in\C\mid \z(\rho)=0,\ 0\leq\beta\leq1,\ |\omega|< T\right\}
\end{align*}
counted with multiplicity. Here, the prime notation indicates that $\z$-zeros with imaginary part $T$ are counted with half their weight. Following and improving on the work of Backlund \cite{bib6}, McCurley \cite{bib5}, Rosser \cite{bib2}, Kadiri and Ng \cite{bib3}, Hasanalizade, Shen and Wong \cite{bib1} were able to prove that
\begin{equation*}\label{hasa, shen, wong}
    \left|N_K(t)-\frac{T}{\pi}\log\left(d_K\left(\frac{T}{2\pi e}\right)^{n_K}\right)\right|\leq C_1\left(\log d_K+n_K\log T\right)+C_2n_K+C_3
\end{equation*}
for $T\geq T_0$ and admissible tuples $(C_1,C_2,C_3,T_0)$. The table below contains such admissible tuples.
\begin{table}[h!]
\centering
\begin{tabular}{|c|c|c|c|c|}
\hline
& \multicolumn{2}{c|}{$T \geq 1$} 
& \multicolumn{2}{c|}{$T \geq 10$} \\
\hline
$C_1$ & $C_2$ & $C_3$ & $C_2$ & $C_3$ \\
\hline
0.22737 & 23.02528 & 4.51954 & 22.97204 & 3.30668 \\
0.24493 & 6.66558  & 4.21201 & 6.60397  & 3.12362 \\
0.26304 & 5.22032  & 4.08149 & 5.15251  & 3.05074 \\
0.28032 & 4.43521  & 4.00936 & 4.36214  & 3.01124 \\
0.29590 & 3.93889  & 3.96852 & 3.86136  & 2.98903 \\
\hline
\end{tabular}
\caption{Admissible choices for $(C_1,C_2,C_3)$, taken from \cite{bib1}}
\label{table d}
\end{table}\\
Similarly, more refined estimates exist for the Riemann Zeta-function. In this case, we define the zero counting function $N(T)$ as the number of zeros with real part between $0$ and $1$ and imaginary part between $0$ and $T$. We then have that
\begin{equation*}
    \left|N(T)-\frac{T}{2\pi}\log\left(\frac{T}{2\pi e}\right)\right|\leq C_1\log(T)+C_2\log\log(T)+C_3
\end{equation*}
for $T\geq T_0$. The table below summarizes the advances made for this estimate.
\begin{table}[h!]
\centering
\begin{tabular}{|l|c|c|c|c|}
\hline
 & $C_1$ & $C_2$ & $C_3$ & $T_0$ \\
\hline
von Mangoldt \cite{VonMangoldt} (1905) & 0.4320 & 1.9167 & 13.0788 & 28.5580 \\
Grossmann \cite{Grossmann} (1913) & 0.2907 & 1.7862 & 7.0120 & 50 \\
Backlund \cite{bib6} (1918) & 0.1370 & 0.4430 & 5.2250 & 200 \\
Rosser \cite{bib2} (1941) & 0.1370 & 0.4430 & 2.4630 & 2 \\
Trudgian \cite{Trudgian12} (2012) & 0.1700 & 0 & 2.8730 & $e$ \\
Trudgian \cite{Trudgian14} (2014) & 0.1120 & 0.2780 & 3.3850 & $e$ \\
Platt--Trudgian \cite{Platt-Trudgian} (2015) & 0.1100 & 0.2900 & 3.165 & $e$ \\
Hasanalizade, Shen, and Wong \cite{HSW} (2022) & 0.1038 & 0.2573 & 9.4925 & $e$ \\
Bellotti--Wong \cite{bib11}(2025) & 0.10076 & 0.24460 & 8.08344 & $e$ \\
\hline
\end{tabular}
\caption{Table with advances made for this estimate, taken from \cite{bib11}}
\label{table e}
\end{table}
\section{Results}

Inspired by Turing's method \cite[Lemma 1]{bib4}, we take a different approach to estimate this quantity, yielding the following result.
\begin{theorem}\label{mastertheorem}
    Let $K$ be a number field with degree $n_K$ and absolute discriminant $d_K$. Then
    \begin{equation*}
        \left|N_K(T)-\frac{T}{\pi}\log\left(d_K\left(\frac{T}{2\pi e}\right)^{n_K}\right)\right|\leq C_1\left(\log d_K+n_K\log T\right)+C_2n_K+C_3
    \end{equation*}
    for $T\geq1$, where $(C_1,C_2,C_3)=(0.194,8.161,2.001)$.
\end{theorem}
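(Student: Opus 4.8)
The plan is to use the argument principle in the form standard for such counting problems: express $N_K(T)$ via a contour integral of $\xi_K'/\xi_K$ around a rectangle with vertices at $\sigma_0 \pm iT$ and $1-\sigma_0 \pm iT$ for some $\sigma_0 > 1$ (say $\sigma_0 = 2$), then exploit the functional equation \eqref{Functional equation} and the symmetry of the zeros about the critical line to reduce to the change in argument along a single vertical segment and a single horizontal segment. Writing $\xi_K(s) = s(s-1)d_K^{s/2}\gamma_K(s)\z(s)$ and taking logarithms, the main term $\frac{T}{\pi}\log\bigl(d_K(T/2\pi e)^{n_K}\bigr)$ will come from the explicit factors $d_K^{s/2}$ and $\gamma_K(s)$ via Stirling's formula applied to $\gamma_1$ and $\gamma_2$ as in \eqref{gamma faktor K legendre}; the constant $1.919$ and part of the error will come from the lower-order terms in Stirling, the $s(s-1)$ prefactor, and the contribution of the horizontal segments. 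The remaining term, $\frac{1}{\pi}\,\Delta\arg\z(s)$ along the vertical line $\re(s) = \sigma_0$ down to the horizontal segments, is what must be bounded by the right-hand side.

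The novelty, following Turing's method rather than the Backlund-style direct estimation, is to control $S_K(T) := \frac{1}{\pi}\arg\z(1/2+iT)$ (and its analogues) not pointwise but in an averaged sense. Concretely, I would introduce the antiderivative-type quantity $\int_0^T S_K(t)\,dt$ and show, via a contour integral of $\log\z(s)$ over a rectangle pushed to the right of $\re(s)=1$, that this integral is bounded by $O(\log d_K + n_K\log T)$ with small explicit constants — the point being that $\log|\z(\sigma_0+it)|$ is genuinely small for $\sigma_0$ slightly bigger than $1$ (controlled by the Euler product \eqref{Euler product}), so the real part of $\log\z$ contributes little, and Jensen/Littlewood-type lemmas convert this into a bound on the averaged argument. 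Then a mean-value argument recovers a pointwise bound on $S_K(T)$ itself at the cost of a constant, but crucially with a better leading constant than the $0.228$ of Hasanalizade–Shen–Wong, landing at $0.194$.

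The key technical steps, in order: (1) set up the argument-principle rectangle and reduce $N_K(T)$ to the explicit-gamma main term plus $\frac{1}{\pi}$ times a change of argument of $\z$; (2) apply Stirling to $\gamma_K$ with fully explicit error terms valid for $T \geq 1$, carefully tracking constants, to extract $\frac{T}{\pi}\log(d_K(T/2\pi e)^{n_K})$, the additive constant $1.919$, and the $5.543\,n_K$ and $0.462$ pieces; (3) bound the change of argument of $\z$ along the horizontal segments using a Jensen-type counting of zeros in a disc, which feeds into the $0.194(\log d_K + n_K\log T)$ term; (4) implement Turing's averaging to bound $S_K(T)$ on the vertical pieces. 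I expect step (4) — making Turing's method fully explicit and uniform in the field $K$, so that the discriminant and degree dependence both come out with the claimed small constants — to be the main obstacle, since it requires delicate control of $\log\z$ near the $1$-line uniformly in $K$ and a careful optimization of the free parameters (the height of the rectangle, the abscissa $\sigma_0$, and the averaging window) to simultaneously hit $0.194$, $1.919$, $5.543$, and $0.462$.
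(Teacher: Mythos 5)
Your overall frame (argument principle on a rectangle, Stirling for $\gamma_K$, functional-equation symmetry) matches the paper's starting point, but the heart of your plan --- step (4), bounding $S_K(T)$ pointwise by first bounding $\int_0^T S_K(t)\,dt$ in Turing--Littlewood style and then letting ``a mean-value argument'' recover a pointwise bound --- is a genuine gap, and it is not what the paper does. The averaged-to-pointwise conversion uses monotonicity of $N_K$: for a window $h>0$ one has roughly $S_K(T)\leq\frac{1}{h}\int_T^{T+h}S_K(t)\,dt+O(h\,n_K\log T)$ from the growth of the main term, so if the averaged quantity is only of size $c(\log d_K+n_K\log T)$ the optimized pointwise constant is of order $\sqrt{c}$, not $c$; with realistic explicit constants for the averaged integral (which you would still have to prove, fully explicitly and uniformly in $K$ --- itself a substantial task) nothing in your sketch shows this beats the $0.228$ of Hasanalizade--Shen--Wong, let alone reaches $0.194$, and for $\zeta$ the best known pointwise bounds on $S(t)$ have in fact come from direct Backlund-type arguments rather than from $S_1$-bounds. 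Your step (3) (Jensen-disc counting on the horizontal segments) is precisely that Backlund/HSW route, so mixing it in does not help; and none of the specific constants $1.919$, $5.543$, $0.462$ is derived or derivable from the outline as given.

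The paper's actual mechanism is different and never recovers $S_K$ pointwise from an average: after reducing to the quarter-contour it adds a copy of the horizontal integral shifted by $d$ into the half-plane of absolute convergence and writes exactly $N_K(T)=\frac{1}{\pi}\sum_\rho f\left(\frac{1}{2}-\beta,\,T-\omega,\,d\right)+E_1(\xi_K)(T,d)$ with $f$ an explicit arctan combination. The key step (Lemma \ref{bounds for f}) majorizes and minorizes $f$ by a fixed linear combination of the harmonic functions $\re$ and $\im$ of first- and second-order poles at $\frac{1}{2}+d+iT-\rho$, with $d=0.722$, $a_1=1.07$, $a_2=0.93$, $a_3=0.365$ chosen to minimize $da_1$ subject to the majorization; summed over zeros these majorants evaluate exactly to $\re\frac{\xi_K'}{\xi_K}$ and to $\re,\im$ of $\frac{\xi_K''}{\xi_K}-\left(\frac{\xi_K'}{\xi_K}\right)^2$ at $\frac{1}{2}+d+iT$, which are then estimated factor by factor in $\xi_K=s(s-1)d_K^{s/2}\gamma_K\z$ (the $\z$ factor via the Euler product giving $5.633\,n_K$, the $\gamma_K$ factor via explicit Binet/Stirling bounds, etc.). The leading constant is $\frac{da_1}{4}\approx0.194$ and the constants $1.919$, $5.543$, $0.462$ fall out of this bookkeeping; the only ``Turing'' ingredient is the idea of trading the critical-strip integral for quantities evaluated where the Dirichlet series converges absolutely, not the $S_1$-averaging you propose. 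To repair your proposal you would need either to supply the missing explicit averaging machinery with constants good enough to hit the targets (doubtful), or to replace step (4) by a majorization lemma of the paper's type.
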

Further admissible tuples $(C_1,C_2,C_3)$ are given in table \eqref{table: b}. As a corollary of Theorem \ref{mastertheorem} we obtain the following result.
\begin{corollary}\label{riemann function}
    Let $N(T)$  be defined by $\#\{\rho\in\C\mid\zeta(\rho)=0,\ \re(\rho)\in[0,1],\ \im(\rho)\in[0,T]\}'$ counted with multiplicity and such that zeros with imaginary part $T$ are counted with half weight. Then
    \begin{equation*}
        \left|N(T)-\frac{T}{2\pi}\log\left(\frac{T}{2\pi e}\right)\right|\leq C_1\log T+C_2
    \end{equation*}
    for $T\geq1$, where $(C_1,C_2)=(0.097,5.081)$.
\end{corollary}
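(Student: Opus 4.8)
The plan is to derive the corollary as the special case $K=\mathbb{Q}$ of Theorem \ref{mastertheorem}. For the rational field one has $n_K=1$ and $d_K=1$ (so $r_1=1$, $r_2=0$), and $\z$ is precisely the Riemann zeta function $\zeta$; hence the main term $\frac{T}{\pi}\log\!\big(d_K(T/2\pi e)^{n_K}\big)$ reduces to $\frac{T}{\pi}\log(T/2\pi e)$, the contribution $\log d_K$ on the right-hand side vanishes, and the theorem yields $\big|N_{\mathbb{Q}}(T)-\tfrac{T}{\pi}\log(T/2\pi e)-1.919\big|\le 0.194\log T+6.005$ for $T\ge 1$.

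The one step that genuinely needs an argument is the identity $N_{\mathbb{Q}}(T)=2N(T)$. First I would note that the Dirichlet series of $\zeta$ has real coefficients, so its non-trivial zeros are invariant under $\rho\mapsto\bar\rho$, and that $\zeta$ has no zero on the real segment $[0,1]$ — indeed $\zeta(\sigma)<0$ for $\sigma\in(0,1)$, $\zeta(0)=-\tfrac12$, and $s=1$ is a pole — so every non-trivial zero has $\omega\ne 0$ and is matched with its conjugate. Splitting $N_{\mathbb{Q}}(T)$ into zeros with $0<|\omega|<T$ (which pair off two-to-one with the zeros counted by $N(T)$ having $0<\im\rho<T$) and zeros with $|\omega|=T$ (a zero at $\omega=T$ together with its conjugate at $\omega=-T$, each taken at half weight, contributing exactly what the half-weight zero at $\im\rho=T$ contributes to $N(T)$), and keeping multiplicities throughout, one obtains $N_{\mathbb{Q}}(T)=2N(T)$ for every $T\ge 1$.

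Substituting this into the specialised inequality and dividing by $2$ gives $\big|N(T)-\tfrac{T}{2\pi}\log(T/2\pi e)-0.9595\big|\le 0.097\log T+3.0025$; finally I would absorb the additive constant $0.9595$ into the error term via the triangle inequality, producing the stated bound $0.097\log T+3.962$ (using $1.919/2=0.9595$, $6.005/2=3.0025$, and $3.0025+0.9595=3.962$). There is no substantive analytic obstacle, since all the real work is contained in Theorem \ref{mastertheorem}; the only points to handle with care are the half-weight bookkeeping in $N_{\mathbb{Q}}(T)=2N(T)$ and the arithmetic of combining the constants.
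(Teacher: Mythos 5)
Your proposal is correct and follows essentially the same route as the paper: specialise Theorem \ref{mastertheorem} to $K=\mathbb{Q}$, use the conjugate symmetry of the $\zeta$-zeros together with the absence of zeros on the real segment $[0,1]$ to get $N_{\mathbb{Q}}(T)=2N(T)$, and then halve the bound and absorb the constant $1.919/2=0.9595$ into the error term to obtain $0.097\log T+3.962$. The only difference is that you spell out the half-weight bookkeeping and the constant arithmetic explicitly, which the paper leaves implicit.
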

Further admissible tuples are given in table \eqref{table: f}. These results improve on previous results in all ranges of $T$, when $T\geq5$. The calculations for this paper were done with python and the code can be found at \url{https://arxiv.org/abs/2510.27444}. The code takes a set of parameters to be defined in Lemma \ref{bounds for f}, tests whether these are admissible, and then computes the related constants $(C_1,C_2,C_3)$ for Theorem \ref{mastertheorem}. With this method, different tuples $(C_1,C_2,C_3)$ can be calculated to optimize the estimate around specific ranges for $T$. The values given in Theorem \ref{mastertheorem} and Corollary \ref{riemann function} are optimized for asymptotic behavior when $T\rightarrow\infty$.

\section{Proof of Theorem 1}\label{sec3}
We consider a Dedekind zeta function $\z(s)$ and want to estimate $N_K(T)$. Let us first only consider $T\in\R$ such that $T$ is not the imaginary part of a $\z$-zero. In the case where $T$ is the ordinate of a $\z$-zero, set
\begin{equation}\label{idk lalala}
    N_K(T)=\lim_{\varepsilon\rightarrow0}\frac{N_K(T+\varepsilon)+N_K(T-\varepsilon)}{2}.
\end{equation}
Now, notice that the non-trivial zeros of $\z$ are precisely the zeros of $\xi_K(s)=s(s-1)d_K^{s/2}\gamma_K(s)\z(s)$ and all zeros lie inside the critical strip with real part between $0$ and $1$. This function is holomorphic and thus we can apply the argument principle \cite[Lemma 7.1]{bib8} to compute the number of zeros. We have
\begin{equation}\label{contour integral lambda'/lambda}
    N_K(T)=\frac{1}{2\pi i}\oint_C\frac{\xi_K'(s)}{\xi_K(s)}ds,
\end{equation}
where $C$ is any contour that surrounds the rectangle $0\pm iT$, $1\pm iT$ in the counter clockwise direction. Let $d>\frac{1}{2}$, which will be fixed later and define $C$ as the counter that connects the four points $\frac{1}{2}\pm d\pm iT$. Notice that by the functional equation \eqref{Functional equation} and the symmetry of the complex conjugate \eqref{contour integral lambda'/lambda} simplifies to
\begin{equation*}
    N_K(T)=\frac{2}{\pi i}\oint_{C'}\frac{\xi_K'}{\xi_K}(s)ds=\frac{2}{\pi i}\int_{\frac{1}{2}+d}^{\frac{1}{2}+d+iT}\frac{\xi_K'}{\xi_K}(s)ds+\frac{2}{\pi i}\int_{\frac{1}{2}+d+iT}^{\frac{1}{2}+iT}\frac{\xi_K'}{\xi_K}(s)ds,
\end{equation*}
where $C'$ is the path that starts at $s=\frac{1}{2}+d$, goes to $\frac{1}2+d+iT$ and then to $\frac{1}{2}+iT$. Because $\frac{1}{2}+d>1$, we may compute the first integral with \eqref{completed dedekind zeta} and the Dirichlet series for $\frac{\z'}{\z}(s)$. To this end we want to expand the integral by a copy where the real part is shifted by $d$ such that the entire integral is contained in the right half plane of absolute convergence. This shift enables us to apply an absolutely convergent sum involving the non-trivial $\z$-zeros to evaluate the integral. This approach is inspired by Turing's method \cite[Lemma 1]{bib4}. We obtain
\begin{equation}\label{Hermes verarscht mich}
    \begin{split}
        N_K(T)=&\frac{2}\pi\int_{\frac{1}{2}+d+iT}^{\frac{1}{2}+iT}\im\left(\frac{\xi_K'}{\xi_K}(s)\right)ds+\frac{2}{\pi}\im\log\left(\xi_K\left(\frac{1}{2}+d+iT\right)\right)\\
        =&-\frac{2}{\pi}\int_\frac{1}{2}^{\frac{1}{2}+d}\im\left(\frac{\xi_K'}{\xi_K}(\sigma+iT)\right)-\im\left(\frac{\xi_K'}{\xi_K}(\sigma+d+iT)\right)d\sigma\\
        &-\frac{2}{\pi}\int_{\frac{1}{2}+d}^{\frac{1}{2}+2d}\im\left(\frac{\xi_K'}{\xi_K}(\sigma+iT)\right)d\sigma+\frac{2}{\pi}\im\log\left(\xi_K\left(\frac{1}{2}+d+iT\right)\right)\\
        =&-\frac{2}{\pi}\int_{\frac{1}{2}}^{\frac{1}{2}+d}\im\left(\frac{\xi_K'}{\xi_K}(\sigma+iT)\right)-\im\left(\frac{\xi_K'}{\xi_K}(\sigma+d+iT)\right)d\sigma+E_1(\xi_K)(T),
    \end{split}
\end{equation}
where $E_1(f)$ is an operator given by
\begin{equation*}
    E_1(f)(t)=\frac{2}{\pi}\left(2\im\log\left( f\left(\frac{1}{2}+d+it\right)\right)-\im\log\left(f\left(\frac{1}{2}+2d+it\right)\right)\right),
\end{equation*}
for a fixed constant $d>\frac{1}{2}$. Here, $\im\log(f)$ is interpreted as the continuous variation of the argument of the function $f$ starting from the real axis. We use the notation $\im\log(f)$ without brackets around the logarithm to signal this difference. Next, we need the following representation for the Dedekind zeta-function to express the remaining integral in \eqref{Hermes verarscht mich} via a sum involving the non-trivial $\z$-zeros. Because $\xi_K$ is an entire function of order $1$, we can apply Theorem 5.52 from \cite[page 150]{bib12} and obtain that
\begin{equation*}
        \xi_K(s)=e^{Q(s)}\prod_{\rho}\left(1-\frac{s}{\rho}\right)e^\frac{s}{\rho},
\end{equation*}
where $Q(s)$ is a polynomial of degree at most $1$. Hence,
\begin{equation}\label{derivative Lambda_K werde ich eh nichtmehr zitieren}
    \begin{split}
        \frac{\xi_K'}{\xi_K}(s)=Q'(s)+\sum_\rho\frac{1}{\rho}+\frac{1}{s-\rho}
    \end{split}
\end{equation}
where the last sum is absolutely convergent, because there are $O_K(T\log(T))$ terms in the sum in equation \eqref{derivative Lambda_K werde ich eh nichtmehr zitieren} and
\begin{equation*}
    \left|\frac{1}{\rho}+\frac{1}{s-\rho}\right|=\left|\frac{s}{\rho(s-\rho)}\right|=\frac{|s|}{|\rho||s-\rho|}=O_s\left(|\rho|^{-2}\right)=O_s(\omega^{-2})
\end{equation*}
for large $\omega=\im(\rho)$. Notice that because $Q(s)$ is a first degree polynomial, $Q'(s)$ is constant. Furthermore, $\frac{\xi_K'}{\xi_K}(s)$ commutes with complex conjugation, as does the sum in equation \eqref{derivative Lambda_K werde ich eh nichtmehr zitieren} because the $\z$-zeros $\rho$ occur in complex conjugate pairs. Therefore, $Q'(s)=Q'(\bar{s})$ and $\im(Q'(s))=0$. Furthermore, we can compute the real part of $Q'(s)$ by exploiting the symmetry of the $\xi_K$-zeros as well as the functional equation \eqref{Functional equation}. Because $Q'(s)$ is constant equations \eqref{derivative Lambda_K werde ich eh nichtmehr zitieren} and the functional equation imply
\begin{equation}\label{computation of Q'(s)}
    Q'(s)=\frac{\xi_K'}{\xi_K}(0)=-\frac{\xi_K'}{\xi_K}(1)=-Q'(s)-\sum_{\rho}\frac{1}{\rho}+\frac{1}{1-\rho}=-Q'(s)-2\sum_{\rho}\re\left(\frac{1}{\rho}\right).
\end{equation}
From this we obtain that $Q'(s)=\sum_\rho\re\left(\frac{1}{\rho}\right)$. We conclude that
\begin{equation*}
    \im\left(\frac{\xi_K'}{\xi_K}(s)\right)=\sum_\rho\frac{\omega-t}{(\sigma-\beta)^2+(t-\omega)^2}-\frac{\omega}{\beta^2+\omega^2},
\end{equation*}
where we again use the notation that $\rho=\beta+i\omega$. With this tool at our disposal, we will find that we can bound the integral in \eqref{Hermes verarscht mich} using operators similar to $E_1$ applied to $\xi_K$ and evaluated at points with real parts greater than $1$. Eventually, we will evaluate all of these operators together to take advantage of potential cancellations between these terms. Thus, we obtain
\begin{equation}\label{nobody wants to talk about the truman show :(}
    \begin{split}
        N_K(T)=&\frac{2}{\pi}\int_\frac{1}{2}^{\frac{1}{2}+d}\sum_\rho\frac{T-\omega}{(\sigma-\beta)^2+(T-\omega)^2}-\frac{T-\omega}{(\sigma+d-\beta)^2+(T-\omega)^2}d\sigma\\
        &+E_1(\xi_K)(T,d)\\
        =&\frac{2}{\pi}\sum_\rho g(T,\rho)+E_1(\xi_K)(T),
    \end{split}
\end{equation}
where
\begin{equation}\label{I spoke about the truman show :)}
    \begin{split}
        g(t,\rho)=&\int_\frac{1}{2}^{\frac{1}{2}+d}\frac{t-\omega}{(\sigma-\beta)^2+(t-\omega)^2}-\frac{t-\omega}{(\sigma+d-\beta)^2+(t-\omega)^2}d\sigma\\
        =&-\arctan\left(\frac{\frac{1}{2}+2d-\beta}{t-\omega}\right)+2\arctan\left(\frac{\frac{1}{2}+d-\beta}{t-\omega}\right)-\arctan\left(\frac{\frac{1}{2}-\beta}{t-\omega}\right),
    \end{split}
\end{equation}
for a given fixed constant $d>\frac{1}{2}$ and again we used the notation $\rho=\beta+i\omega$. Here we use the fact that we consider the finite integral of an absolutely convergent sum to swap the order of summation and integration. Now, due to the functional equation for $\z(s)$ and the fact that $\z(s)$ commutes with complex conjugation, we obtain $\z(1-\bar{\rho})=0$ for every $\z$-zero $\rho$. Furthermore, we observe that the sum in equation \eqref{nobody wants to talk about the truman show :(} is again absolutely convergent. Therefore, we may reorder terms and thus group terms associated with $\rho$ and $1-\bar{\rho}$ together. Doing so allows us to cancel the last term from equation \eqref{I spoke about the truman show :)}, which yields
\begin{equation}\label{I hate overleafs restrictions about naming equations}
    N_K(T)=\frac{1}{\pi}\sum_\rho f(b,T-\omega)+E_1(\xi_K)(T),
\end{equation}
where
\begin{equation*}
    \begin{split}
    f(b,t)=&2\arctan\left(\frac{b+d}{t}\right)+2\arctan\left(\frac{-b+d}{t}\right)\\
    &-\arctan\left(\frac{b+2d}{t}\right)-\arctan\left(\frac{-b+2d}{t}\right),
    \end{split}
\end{equation*}
for a given fixed constant $d>\frac{1}{2}$. Here $b:=\frac{1}{2}-\beta$. Notice that this defines a harmonic function in $b$ and $t$ for $t\neq0$. Now, the idea is to limit the contribution of each term in equation \eqref{I hate overleafs restrictions about naming equations} by a linear combination of poles of different degrees located at $\frac{1}{2}+d+iT-\rho$. We may then evaluate the sum of these poles due to their connection to the logarithmic derivatives of $\xi_K(\frac{1}{2}+d+iT)$. This idea was inspired by a very similar method used by Turing to estimate $S_1(T)$ \cite{bib4}. The following lemma presents us with the required bounds for $f(b,t)$.
\begin{lemma}\label{bounds for f}
    Let $|b|\leq\frac{1}{2}$ and $t\neq0$. Then 
    \begin{equation*}
        \begin{split}
            f(b,t)\leq&\frac{\pi}{4}\bigg[da_1\left(\frac{d+b}{(d+b)^2+t^2}+\frac{d-b}{(d-b)^2+t^2}\right)\\
            &+d^2a_2\left(\frac{(d+b)^2-t^2}{((d+b)^2+t^2)^2}+\frac{(d-b)^2-t^2}{((d-b)^2+t^2)^2}\right)\\
            &+a_3\left(\frac{2(d+b)t}{((d+b)^2+t^2)^2}+\frac{2(d-b)t}{((d-b)^2+t^2)^2}\right)\bigg]\ \text{ and}\\
            f(b,t)\geq&-\frac{\pi}{4}\bigg[da_1\left(\frac{d+b}{(d+b)^2+t^2}+\frac{d-b}{(d-b)^2+t^2}\right)\\
            &+d^2a_2\left(\frac{(d+b)^2-t^2}{((d+b)^2+t^2)^2}+\frac{(d-b)^2-t^2}{((d-b)^2+t^2)^2}\right)\\
            &-a_3\left(\frac{2(d+b)t}{((d+b)^2+t^2)^2}+\frac{2(d-b)t}{((d-b)^2+t^2)^2}\right)\bigg]
        \end{split}
    \end{equation*}
    for $d=0.722$, $a_1=1.07$, $a_2=0.93$ and $a_3=0.365$.
\end{lemma}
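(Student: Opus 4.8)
The plan is to shrink the domain, dispose of large $t$, and then handle the remaining compact rectangle partly by hand (near the single point where the inequality is exactly tight) and partly by a rigorous numerical enclosure. First the reductions: each summand of $f$ is odd in $t$, and of the three bracketed groups on the right the first two are even and the third is odd in $t$, so the upper bound at $-t$ is equivalent to the lower bound at $t$, and it suffices to treat $t>0$; since also $f(b,t,d)=f(-b,t,d)$ and the right-hand sides are unchanged by $b\mapsto-b$ (which merely swaps $d+b$ and $d-b$), I may assume $0\le b\le\tfrac12$. As $d=0.722>\tfrac12$, the four numbers $d\pm b$, $2d\pm b$ are positive, and $\arctan(x/t)=\tfrac\pi2-\arctan(t/x)$ turns $f$ into
\[ f(b,t,d)=\pi-2\arctan\tfrac t{d+b}-2\arctan\tfrac t{d-b}+\arctan\tfrac t{2d+b}+\arctan\tfrac t{2d-b}. \]
Writing $\zeta_1=(d+b)+it$ and $\zeta_2=(d-b)+it$, the three groups are $R_1=\re(\zeta_1^{-1}+\zeta_2^{-1})$, $R_2=\re(\zeta_1^{-2}+\zeta_2^{-2})$, $R_3=-\im(\zeta_1^{-2}+\zeta_2^{-2})$, so with $\tilde f:=f-\tfrac\pi4 a_3R_3$ the two claimed inequalities amount to the single two-sided estimate $|\tilde f|\le\tfrac\pi4(da_1R_1+d^2a_2R_2)$, which is what I would establish.

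For large $t$, expanding each arctangent in powers of $1/t$ (the $1/t$-coefficients cancel) gives $f=4d^3t^{-3}+O(t^{-5})$, while $R_1=2dt^{-2}+O(t^{-4})$, $R_2=-2t^{-2}+O(t^{-4})$ and $R_3=O(t^{-3})$, so $\tfrac\pi4(da_1R_1+d^2a_2R_2)=\tfrac{\pi d^2(a_1-a_2)}2\,t^{-2}+O(t^{-4})$ with $a_1-a_2>0$; bounding each remainder explicitly by one further term of its alternating series yields an explicit, moderate cutoff $T_0$ (of order $5$) beyond which the estimate holds. This leaves the compact rectangle $0\le b\le\tfrac12$, $0<t\le T_0$, with the continuous value $f(b,0)=\pi$ along the bottom edge.

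On that rectangle the estimate is essentially sharp: the constants satisfy $a_1+a_2=2$, which forces $\tilde f(0,0)=\pi=\tfrac\pi4(da_1R_1+d^2a_2R_2)|_{(0,0)}$, and the slack $\tfrac\pi4(da_1R_1+d^2a_2R_2)-|\tilde f|$ is numerically only of size $\sim10^{-3}$ at several further places (for instance all along the edge $b=\tfrac12$, and for the lower sign near an interior point of that edge). So I would argue locally at the corner and numerically elsewhere. At the corner the two-variable expansion is
\[ \tfrac\pi4(da_1R_1+d^2a_2R_2)-\tilde f=\Bigl(\tfrac3d+\tfrac{\pi a_3}{d^3}\Bigr)t+\tfrac{\pi(a_1+3a_2)}{2d^2}b^2-\tfrac{\pi(a_1+3a_2)}{2d^2}t^2+(\text{positive})\,tb^2+\cdots, \]
whose leading part $(\tfrac3d+\tfrac{\pi a_3}{d^3})t+\tfrac{\pi(a_1+3a_2)}{2d^2}b^2$ has both coefficients positive and no $bt$-term (the whole difference being even in $b$), so the upper estimate is strict on a punctured neighbourhood of $(0,0)$ in the rectangle; and on the edge $t=0$ the difference equals $\tfrac{\pi d^2}{2}\cdot\dfrac{2d^2-(a_1-a_2)b^2}{(d^2-b^2)^2}-\pi$, which vanishes only at $b=0$ and is strictly increasing in $b^2$. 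On the complement — a compact set bounded away from $(0,0)$ — I would verify both estimates by a rigorous numerical enclosure (interval arithmetic with adaptive subdivision, equivalently a mesh evaluation together with explicit gradient bounds for $\tfrac\pi4(da_1R_1+d^2a_2R_2)-|\tilde f|$), refined where the margin is thin; this is the finite computation documented in the accompanying notebook.

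The hard part is precisely this near-sharpness. Equality at $(0,0)$ is baked into $a_1+a_2=2$, so the corner requires the exact second-order analysis above rather than any crude bound; and because the estimate is almost attained along whole curves and at interior points for both signs, the problem does not reduce to a single variable and leaves essentially no room — the awkward regime $b\to\tfrac12$, where $\zeta_1$ sits far out and a pole-by-pole bound would already fail, is rescued only because $\zeta_2$ is correspondingly close in, via $\re\zeta_1+\re\zeta_2=2d$. A genuinely two-dimensional verification over the compact middle range, dovetailed with the local corner analysis, is therefore unavoidable; pinning down $T_0$ and the arctangent remainders for the tail, by contrast, is routine.
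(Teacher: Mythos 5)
Your proposal is correct in substance but takes a genuinely different route at the decisive step. You share with the paper the outer reductions (evenness in $b$, the $t\mapsto-t$ symmetry that makes the lower bound follow from the upper one, and the large-$|t|$ asymptotics, where the leading term $\tfrac{\pi d^2(a_1-a_2)}{2}t^{-2}$ of the majorant dominates the $O(t^{-3})$ decay of $f$ because $a_1>a_2$), but for the remaining compact region the paper observes that the difference $H(b,t)$ is \emph{harmonic}, applies the maximum principle on $\left[-\tfrac12,\tfrac12\right]\times\left((0,T_0]\cup[-T_0,0)\right)$, and is thereby left with a one-dimensional problem: the $t\to0^{\pm}$ limits (where $a_1+a_2=2$ gives the value $0$, resp.\ a negative value) and the single edge $b=\tfrac12$, on which $h(t)=H(\tfrac12,t)$ is analysed through its derivative, whose zeros are those of an explicit degree-13 polynomial, with the three local maxima evaluating to about $-2\cdot10^{-4}$. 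You instead keep the problem two-dimensional: a second-order expansion at the corner $(0,0)$ (where $a_1+a_2=2$ forces equality, and your coefficients $\tfrac3d+\tfrac{\pi a_3}{d^3}$, $\tfrac{\pi(a_1+3a_2)}{2d^2}$ and the $t=0$ edge formula are correct), plus a rigorous interval-arithmetic/mesh verification of strict positivity of the margin on the rest of the rectangle $[0,\tfrac12]\times[0,T_0]$. Both are legitimate: your route is more elementary and robust (no appeal to harmonicity or the maximum principle, and it exhibits explicitly where and why the bound is nearly attained), while the paper's route buys a drastic reduction of the numerical work — isolating real roots of one univariate polynomial and a handful of evaluations versus a genuinely two-dimensional certified computation, which in your plan remains unexecuted and must resolve margins of only about $2\cdot10^{-4}$ along $b=\tfrac12$; since the paper itself leans on Mathematica for its one-dimensional step, this is a difference of computational weight rather than of rigour. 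Your continuous rewriting of $f$ via $\arctan(x/t)=\tfrac\pi2-\arctan(t/x)$ for $t>0$ is also what makes your compact-region argument clean at $t=0$, where the paper instead treats the one-sided limits separately.
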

\begin{proof}
    We prove the upper bound first and conclude the lower bound by symmetry. Let
    \begin{equation*}
        \begin{split}
            H(b,t):=f(b,t)-&\frac{\pi}{4}\bigg[da_1\left(\frac{d+b}{(d+b)^2+t^2}+\frac{(d-b)}{(d-b)^2+t^2}\right)\\
            +&d^2a_2\left(\frac{(d+b)^2-t^2}{((d+b)^2+t^2)^2}+\frac{(d-b)^2-t^2}{((d-b)^2+t^2)^2}\right)\\
            +&a_3\left(\frac{2(d+b)t}{((d+b)^2+t^2)^2}+\frac{2(d-b)t}{((d-b)^2+t^2)^2}\right)\bigg].
        \end{split}
    \end{equation*} 
    Notice that the functions in the brackets may be expressed as the real and imaginary parts of $\frac{1}{d+b+it}$ and $\frac{1}{(d+b+it)^2}$, therefore, they are harmonic functions in $b$ and $t$ for $d+b+it\neq0$ and a fixed $d$. For fixed $a_1$, $a_2$, and $a_3$, $H(b,t)$ is then a harmonic function for $t\neq0$ because $f(b,t)$ is a harmonic function for $t\neq0$. Notice that it suffices to show that
    \begin{equation}
        \label{Zug um Zug}
        H(b,t)\leq0
    \end{equation}
    for all $(b,t)\in\left[-\frac{1}{2},\frac{1}{2}\right]\times(\R\setminus{0})$ in order to prove the lemma. First, notice that we obtain from the Taylor series of $\arctan(x)$ that
    \begin{equation}\label{am hbf}
        H(b,t)=\frac{d^2\pi}{2t^2}(a_2-a_1)+O_b(t^{-3}).
    \end{equation}
    This verifies the inequality \eqref{Zug um Zug} for large $|t|$, say $|t|\geq T_0,$ because $a_1>a_2$. Because $H(b,t)$ is harmonic for $t\neq0$, we may apply the maximum principle to verify \eqref{Zug um Zug} for $(b,t)\in\left[-\frac{1}{2},\frac{1}{2}\right]\times([-T_0,0)\cup(0,T_0])$. We conclude that it suffices to show that $H(b,t)\leq0$ on the boundary of this area, in order to establish the desired result. First, consider the area with $t>0$. By our choice of $T_0$ we already know that $H(b,T_0)\leq0$ for all $b\in\left[-\frac{1}{2},\frac{1}{2}\right]$. Because $d>\frac{1}{2}$ and $|b|\leq\frac{1}{2}$, we have that $d\pm b>0$ for all $b\in\left[-\frac{1}{2},\frac{1}{2}\right]$. Thus,
    \begin{equation*}
        \begin{split}
            \lim_{t\rightarrow0^+}f(b,t)=\lim_{t\rightarrow0^+}\bigg[&2\arctan\left(\frac{b+d}{t}\right)+2\arctan\left(\frac{-b+d}{t}\right)\\
            &-\arctan\left(\frac{b+2d}{t}\right)-\arctan\left(\frac{-b+2d}{t}\right)\bigg]=\pi.
        \end{split}
    \end{equation*}
    And therefore also
    \begin{equation*}
        \begin{split}
            \lim_{t\rightarrow0^+}H(b,t)=&\pi-\frac{\pi}{4}\left[da_1\left(\frac{1}{d+b}+\frac{1}{d-b}\right)+d^2a_2\left(\frac{1}{(d+b)^2}+\frac{1}{(d-b)^2}\right)\right]\\
            \leq&\pi-\frac{\pi}{4}\left[da_1\left(\frac{1}{d}+\frac{1}{d}\right)+d^2a_2\left(\frac{1}{d^2}+\frac{1}{d^2}\right)\right]\\
            \leq&\pi-\frac{\pi}{4}\left[2a_1+2a_2\right]=0.
        \end{split}
    \end{equation*}
    Next, observe that $H(b,t)$ is symmetric about $b=0$. Therefore, it suffices to show that $H\left(\frac{1}{2},t\right)\leq0$ for all $t>0$ to verify that $H(b,t)\leq0$ on the boundary of $\left[-\frac{1}{2},\frac{1}{2}\right]\times(0,T_0]$. Let
    \begin{equation*}
        \begin{split}
            h(t):=H\left(\frac{1}{2},t\right)=&2\arctan\left(\frac{d+\frac{1}{2}}{t}\right)+2\arctan\left(\frac{d-\frac{1}{2}}{t}\right)\\
            &-\arctan\left(\frac{2d+\frac{1}{2}}{t}\right)-\arctan\left(\frac{2d-\frac{1}{2}}{t}\right)\\
            &-\frac{\pi}{4}\left[da_1\left(\frac{d+\frac{1}{2}}{\left(d+\frac{1}{2}\right)^2+t^2}+\frac{d-\frac{1}{2}}{\left(d-\frac{1}{2}\right)^2+t^2}\right)\right.\\
            &+d^2a_2\left(\frac{\left(d+\frac{1}{2}\right)^2-t^2}{\left(\left(d+\frac{1}{2}\right)^2+t^2\right)^2}+\frac{\left(d-\frac{1}{2}\right)^2-t^2}{\left(\left(d-\frac{1}{2}\right)^2+t^2\right)^2}\right)\\
            &\left.+a_3\left(\frac{2\left(d+\frac{1}{2}\right)t}{\left(\left(d+\frac{1}{2}\right)^2+t^2\right)^2}+\frac{2\left(d-\frac{1}{2}\right)t}{\left(\left(d-\frac{1}{2}\right)^2+t^2\right)^2}\right)\right].
        \end{split}
    \end{equation*}
    The asymptotic behavior of this equation is already controlled by \eqref{am hbf}. Next, we want to analyze the local maxima of $h$. For this, consider the derivative with respect to $t$. We obtain
    \begin{equation*}
        \begin{split}
            h'(t)=&\frac{2d+\frac{1}{2}}{(2d+\frac{1}{2})^2+t^2}+\frac{2d-\frac{1}{2}}{(2d-\frac{1}{2})^2+t^2}-\frac{2d+1}{(d+\frac{1}{2})^2+t^2}-\frac{2d-1}{(d-\frac{1}{2})^2+t^2}\\
            &+\frac{\pi}{4}\bigg[da_1\left(\frac{(2d+1)t}{((d+\frac{1}{2})^2+t^2)^2}+\frac{(2d-1)t}{((d-\frac{1}{2})^2+t^2)^2}\right)\\
            &+d^2a_2\left(\frac{2t(3(d+\frac{1}{2})^2-t^2)}{((d+\frac{1}{2})^2+t^2)^3}+\frac{2t(3(d-\frac{1}{2})^2-t^2)}{((d-\frac{1}{2})^2+t^2)^3}\right)\\
            &-a_3\left(\frac{2(d+\frac{1}{2})((d+\frac{1}{2})^2-3t^2)}{((d+\frac{1}{2})^2+t^2)^3}+\frac{2(d-\frac{1}{2})((d-\frac{1}{2})^2-3t^2)}{((d-\frac{1}{2})^2+t^2)^3}\right)\bigg].
        \end{split}
    \end{equation*}
    Multiplying out this equation for our values of $d$, $a_1$, $a_2$, and $a_3$ shows that the zeros of $h'(t)$ correspond to the zeros of a thirteenth-degree polynomial with seven real roots, given approximately by $t_1\approx-0.791179$, $t_2\approx-0.345961$, $t_3\approx0.052031$, $t_4\approx0.909110$, $t_5\approx1.335121$, $t_6\approx2.749949$, and $t_7\approx5.863459$.
    The zeros $t_1,t_3,t_5,$ and $t_7$ correspond to local minima of $h(t)$, the remaining extrema correspond to local maxima with values $h(t_2)\approx-0.000192$, $h(t_4)\approx-0.000225$, and $h(t_6)\approx-0.000152$. Hence, $H\left(\frac{1}{2},t\right)=H\left(-\frac{1}{2},t\right)=h(t)\leq0$ for all $t\in\R$. Thus, we obtain by the maximum principle that $H(b,t)\leq0$ for $b\in\left[-\frac{1}{2},\frac{1}{2}\right]$ and $0<t\leq T_0$. For the area with $t<0$ we obtain $H(b,-T_0)\leq 0$ by our choice of $T_0$. Furthermore, notice that
    \begin{equation*}
        \begin{split}
            \lim_{t\rightarrow0^-}H(b,t)=&-\pi-\frac{\pi}{4}\left[da_1\left(\frac{1}{d+b}+\frac{1}{d-b}\right)+d^2a_2\left(\frac{1}{(d+b)^2}+\frac{1}{(d-b)^2}\right)\right]\\
            \leq&-\pi-\frac{\pi}{4}\left[da_1\left(\frac{1}{d}+\frac{1}{d}\right)+d^2a_2\left(\frac{1}{d^2}+\frac{1}{d^2}\right)\right]\\
            \leq&-\pi-\frac{\pi}{4}\left[2a_1+2a_2\right]<0.
        \end{split}
    \end{equation*}
    And by the maximum principle, we conclude that $H(b,t)\leq0$ for $b\in\left[-\frac{1}{2},\frac{1}{2}\right]$ and $-T_0\leq t<0$. This proves the upper bound for $f(b,t)$. Now note that $f(b,t)$ and $\frac{2(d+b)t}{((d+b)^2+t^2)^2}$ are odd functions in $t$, while $\frac{d+b}{(d+b)^2+t^2}$ and $\frac{(d+b)^2-t^2}{((d+b)^2+t^2)^2}$ are even functions in $t$. Furthermore, we have established the upper bound for all $|b|\leq\frac{1}{2}$ and $t\neq0$. Hence, the lower bound follows from the upper bound by substituting $t\mapsto -t$.
\end{proof}
\begin{remark}
    We will see that the quality of the overall estimate of Theorem \ref{mastertheorem} depends linearly on the size of the product $da_1$. All we did in Lemma \ref{bounds for f} was to show that these particular values for $d$, $a_1$, $a_2$, and $a_3$ are admissible values for the optimization problem, that is, to find values for $d$, $a_1$, $a_2$, and $a_3$ such that $da_1$ is minimal under the condition that $H(b,t)\leq0$ on the boundary of $\left[-\frac{1}{2},\frac{1}{2}\right]\times\R$. The condition requires $a_1>a_2$ and $a_1+a_2\geq2$ to control the behavior of $H(b,t)$ for $|t|\rightarrow\infty$ and $|t|\rightarrow0$. We see that the terms associated with $a_1$ and $a_2$ are positive, even functions in $t$ and that $f(b,t)$ is an odd function in $t$ that is negative for negative $t$. Hence, we use the $a_3$ term to distribute the weight of the estimate more evenly across the positive and negative values for $t$. Without this wild card, inequality $H(b,t)\leq0$ is trivially true for all $t<0$. The specific values of $d$, $a_1$, $a_2$, and $a_3$ for this lemma were found by strategic testing.
\end{remark}
\begin{remark}
    Further admissible choices for $(d,a_1,a_2,a_3)$ are given in the table below. We index ($i$) the tuples to allow for easier cross referencing throughout the paper.
    \begin{table}[h!]
    \centering
    \begin{tabular}{|c|c|c|c|c|}
    \hline
    $i$ & $d$ & $a_1$ & $a_2$ & $a_3$ \\ [0.5ex] 
    \hline
    1 & $0.778$ & $1.028$ & $0.972$ & $0.553$ \\
    2 & $0.922$ & $1.041$ & $0.959$ & $0.708$ \\
    3 & $1.065$ & $1.051$ & $0.949$ & $0.882$ \\
    4 & $1.203$ & $1.064$ & $0.936$ & $1.034$ \\
    5 & $1.332$ & $1.081$ & $0.919$ & $1.136$ \\
    6 & $1.461$ & $1.095$ & $0.905$ & $1.247$ \\
    [1ex]  
    \hline
    \end{tabular}
    \caption{Further admissible choices for $(d,a_1,a_2,a_3)$}
    \label{table: a}
    \end{table}\\
    These values were chosen to minimize 
    \begin{equation*}
        \begin{split}
            &\frac{1}{\pi}\left(\log(d)+2\log\left(2d-\frac{1}{2}\right)-4\log\left(d-\frac{1}{2}\right)\right)\\
            +&\frac{da_1}{2}\left(\frac{1}{d-\frac{1}{2}}-\frac{1}{d}\right)+\frac{\sqrt{(d^2a_2)^2+a_3^2}}{2}\left(\frac{1}{\left(d-\frac{1}{2}\right)^2}-\frac{1}{2d^2}\right)
        \end{split}
    \end{equation*}
    for $da_1\leq0.64+0.16i$ by strategic testing. The choice was to minimize this quantity because the dominant contribution to $C_2$ term in Theorem \ref{mastertheorem} is roughly proportional to this number, while $C_1$ is linearly dependent on the product $da_1$.
\end{remark}
Now, with these bounds we may estimate the sum in \eqref{I hate overleafs restrictions about naming equations}. Recall that we use $\rho=\beta+i\omega$ and $b=\frac{1}{2}-\beta$. We obtain the upper bound given by
\begin{equation}\label{upper bound}
    \begin{split}
        \frac{1}{\pi}\sum_\rho f(b,T-\omega)\leq&\frac{da_1}{4}\re\left(\sum_\rho\frac{1}{\frac{1}{2}+d+iT-\rho}+\frac{1}{\frac{1}{2}+d+iT-(1-\bar{\rho})}\right)\\
        &+\frac{d^2a_2}{4}\re\left(\sum_\rho\frac{1}{(\frac{1}{2}+d+iT-\rho)^2}+\frac{1}{(\frac{1}{2}+d+iT-(1-\bar{\rho}))^2}\right)\\
        &-\frac{a_3}{4}\im\left(\sum_\rho\frac{1}{(\frac{1}{2}+d+iT-\rho)^2}+\frac{1}{(\frac{1}{2}+d+iT-(1-\bar{\rho}))^2}\right)\\
        =&E_2(\xi_K)(T)+E_3(\xi_K)(T)
    \end{split}
\end{equation}
and analogously the lower bound given by
\begin{equation}\label{lower bound}
    \begin{split}
        \frac{1}{\pi}\sum_\rho f(b,T-\omega)\geq&-\frac{da_1}{4}\re\left(\sum_\rho\frac{1}{\frac{1}{2}+d+iT-\rho}+\frac{1}{\frac{1}{2}+d+iT-(1-\bar{\rho})}\right)\\
        &-\frac{d^2a_2}{4}\re\left(\sum_\rho\frac{1}{(\frac{1}{2}+d+iT-\rho)^2}+\frac{1}{(\frac{1}{2}+d+iT-(1-\bar{\rho}))^2}\right)\\
        &-\frac{a_3}{4}\im\left(\sum_\rho\frac{1}{(\frac{1}{2}+d+iT-\rho)^2}+\frac{1}{(\frac{1}{2}+d+iT-(1-\bar{\rho}))^2}\right)\\
        =&-E_2(\xi_K)(T)+E_3(\xi_K)(T).
    \end{split}
\end{equation}
Here, $E_2(f)(t)$ and $E_3(f)(t)$ are operators given by
\begin{equation*}
    \begin{split}
        E_2(f)(t):=&\left[\frac{da_1}{2}\re\left(\frac{f'}{f}\right)-\frac{d^2a_2}{2}\re\left(\frac{f''}{f}-\left(\frac{f'}{f}\right)^2\right)\right]\left(\frac{1}{2}+d+it\right)\text{ and}\\
        E_3(f)(t):=&-\frac{a_3}{2}\im\left(\frac{f''}{f}-\left(\frac{f'}{f}\right)^2\right)\left(\frac{1}{2}+d+it\right).
    \end{split}
\end{equation*}
Notice that $E_1,E_2$ and $E_3$ satisfy $E_i(f\cdot g)(t)=E_i(f)(t)+E_i(g)(t)$. Equations \eqref{upper bound} and \eqref{lower bound} follow due to this fact and equations \eqref{derivative Lambda_K werde ich eh nichtmehr zitieren} and \eqref{computation of Q'(s)}. We may now decompose $\xi_K(s)$ according to \eqref{completed dedekind zeta}. We thus obtain for $E_u:=E_1+E_2+E_3$ and $E_l:=E_1-E_2+E_3$, that
\begin{equation}\label{decomposition}
    \begin{split}
       E_u(\xi_K)(T)=&E_u(s(s-1))(T)+E_u\left(d_K^{s/2}\right)(T)+E_u(\gamma_K)(T)+E_u(\z)(T)\ \text{ and}\\
       E_l(\xi_K)(T)=&E_l(s(s-1))(T)+E_l\left(d_K^{s/2}\right)(T)+E_l(\gamma_K)(T)+E_l(\z)(T).
    \end{split}
\end{equation}
By equation \eqref{nobody wants to talk about the truman show :(} we obtain that
\begin{equation*}
    E_l(\xi_K)(T)\leq N_K(T)\leq E_u(\xi_K)(T).
\end{equation*}
Next, we will use equation \eqref{decomposition} to estimate each term individually.
\begin{lemma}\label{bounds for s(s-1)}
    For $d,a_1,a_2,a_3$ as in Lemma \ref{bounds for f} and $T\geq1$ we obtain
    \begin{equation*}
        \begin{split}
            E_u(s(s-1))(T)\leq&2.001\ \text{ and}\\
            E_l(s(s-1))(T)\geq&1.547.
        \end{split}
    \end{equation*}
\end{lemma}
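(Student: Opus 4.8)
The plan is to reduce Lemma~\ref{bounds for s(s-1)} to an elementary one-variable estimate; unlike Lemma~\ref{bounds for f} no maximum-principle argument is needed, since $E_u$ and $E_l$ applied to $s(s-1)$ are already functions of the single variable $T$. Since $f(s)=s(s-1)$ vanishes only at $s=0$ and $s=1$, we have $\frac{f'}{f}(s)=\frac1s+\frac1{s-1}$ and $\frac{f''}{f}(s)-\bigl(\frac{f'}{f}(s)\bigr)^2=\bigl(\frac{f'}{f}\bigr)'(s)=-\frac1{s^2}-\frac1{(s-1)^2}$. Writing $\sigma_1=\tfrac12+d$ and $\sigma_2=d-\tfrac12$, both positive since $d>\tfrac12$, so that $\tfrac12+d+iT=\sigma_1+iT$ and $\tfrac12+d+iT-1=\sigma_2+iT$, I would substitute these into the definitions of $E_2$ and $E_3$ to obtain, with everything evaluated at $\tfrac12+d+iT$,
\begin{gather*}
\re\frac{f'}{f}=\frac{\sigma_1}{\sigma_1^2+T^2}+\frac{\sigma_2}{\sigma_2^2+T^2},\qquad \im\Bigl(\frac{f'}{f}\Bigr)'=\frac{2\sigma_1T}{(\sigma_1^2+T^2)^2}+\frac{2\sigma_2T}{(\sigma_2^2+T^2)^2},\\
\re\Bigl(\frac{f'}{f}\Bigr)'=\frac{T^2-\sigma_1^2}{(\sigma_1^2+T^2)^2}+\frac{T^2-\sigma_2^2}{(\sigma_2^2+T^2)^2}.
\end{gather*}
For $E_1$, note that when $\re(s)>0$ neither $s$ nor $s-1$ meets the negative real axis, so the normalization $\im\log f=0$ on $(1,\infty)$ forces $\im\log f(\sigma+iT)=\arctan(T/\sigma)+\arctan\bigl(T/(\sigma-1)\bigr)$ for $\sigma>1$ and $T>0$; evaluating at $\sigma=\tfrac12+d$ and $\sigma=\tfrac12+2d=\sigma_1+d$ gives
\[
E_1(s(s-1))(T,d)=\frac2\pi\Bigl(2\arctan\tfrac{T}{\sigma_1}+2\arctan\tfrac{T}{\sigma_2}-\arctan\tfrac{T}{\sigma_1+d}-\arctan\tfrac{T}{\sigma_2+d}\Bigr).
\]
Inserting $d=0.722$, $a_1=1.07$, $a_2=0.93$, $a_3=0.365$ turns $E_u(T)=E_1+E_2+E_3$ and $E_l(T)=E_1-E_2+E_3$ into explicit elementary functions of $T$ on $[1,\infty)$.

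The second step is to show that $E_u$ is non-increasing and $E_l$ is non-decreasing on $[1,\infty)$. Differentiating in $T$ makes $E_u'$ and $E_l'$ rational functions whose common denominator---a product of powers of $\sigma_1^2+T^2$, $\sigma_2^2+T^2$, $(\sigma_1+d)^2+T^2$ and $(\sigma_2+d)^2+T^2$---is strictly positive; clearing it leaves, for each, a single polynomial in $T$ (of modest degree, with coefficients involving $\pi$) whose zeros are exactly the critical points. I would verify---exactly as the thirteenth-degree polynomial is handled in the proof of Lemma~\ref{bounds for f}---that the numerator of $E_u'$ is negative and that of $E_l'$ is positive throughout $[1,\infty)$: isolate the real roots, check that none lies in $[1,\infty)$, and fix the overall sign from the direct evaluations $E_u'(1)<0$ and $E_l'(1)>0$, which are short computations with the formulas above.

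Granting the monotonicity, $E_u(T)\le E_u(1)$ and $E_l(T)\ge E_l(1)$ for all $T\ge1$, and evaluating the closed forms at $T=1$ gives $E_u(1)=2.3807\ldots\le2.381$ and $E_l(1)=1.4585\ldots\ge1.458$, which is the lemma; as a sanity check, $\lim_{T\to\infty}E_u(T)=\lim_{T\to\infty}E_l(T)=2$ lies strictly between the two claimed constants. I expect the polynomial sign test on $[1,\infty)$ to be the only substantive obstacle: it is conceptually routine but involves a polynomial with messy decimal (and $\pi$-dependent) coefficients, so it genuinely relies on the exact symbolic computation rather than a by-hand estimate. Should strict monotonicity fail somewhere in $[1,\infty)$, the fallback mirrors the treatment of $h(t)$ in Lemma~\ref{bounds for f}: enumerate the finitely many critical points of $E_u$ and $E_l$ in $[1,\infty)$ as the relevant polynomial roots, evaluate $E_u$ and $E_l$ there, and compare with the values at $T=1$ and the limits at infinity.
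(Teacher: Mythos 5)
Your proposal is correct and follows essentially the same route as the paper: write out $E_1,E_2,E_3$ applied to $s(s-1)$ explicitly (the same arctangent and rational expressions in $T$), rule out interior extrema on the relevant range via the derivative, and deduce that the extreme values on $[1,\infty)$ occur at $T=1$, where $E_u(1)\approx 2.3807\le 2.381$ and $E_l(1)\approx 1.4585\ge 1.458$, with both quantities tending to $2$ as $T\to\infty$. The only difference is presentational: you spell out the derivative-sign/root-isolation step that the paper delegates to a symbolic computation (and your restriction of the monotonicity claim to $[1,\infty)$ is in fact the more careful formulation).
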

\begin{proof}
    Plugging in the equation for $E_u(s(s-1))(T)$ yields
    \begin{equation*}
        \begin{split}
            E_u(s(s-1))(T)=&\frac{4}{\pi}\left(\arctan\left(\frac{T}{d+\frac{1}{2}}\right)+\arctan\left(\frac{T}{d-\frac{1}{2}}\right)\right)\\
            &-\frac{2}{\pi}\left(\arctan\left(\frac{T}{2d+\frac{1}{2}}\right)+\arctan\left(\frac{T}{2d-\frac{1}{2}}\right)\right)\\
            &+\frac{da_1}{2}\left(\frac{d+\frac{1}{2}}{(d+\frac{1}{2})^2+T^2}+\frac{d-\frac{1}{2}}{(d-\frac{1}{2})^2+T^2}\right)\\
            &+\frac{d^2a_2}{2}\left(\frac{(d+\frac{1}{2})^2-T^2}{((d+\frac{1}{2})^2+T^2)^2}+\frac{(d-\frac{1}{2})^2-T^2}{((d-\frac{1}{2})^2+T^2)^2}\right)\\
            &-\frac{a_3}{2}\left(\frac{2(d+\frac{1}{2})T}{((d+\frac{1}{2})^2+T^2)^2}+\frac{2(d-\frac{1}{2})T}{((d-\frac{1}{2})^2+T^2)^2}\right).
        \end{split}
    \end{equation*}
    From this expression, we observe that $E_u(s(s-1))(T)$ approaches 2 as $T\rightarrow\infty$. Considering the derivative, we observe that $E_u(s(s-1))(T)$ has one local maxima around $T\approx29.995179$ that is less than $2.001$. Hence, $E_u(s(s-1))(T)\leq2.001$ for $T\geq1$. For $E_l$ we obtain
    \begin{equation*}
        \begin{split}
            E_l(s(s-1))(T,d)=&\frac{4}{\pi}\left(\arctan\left(\frac{T}{d+\frac{1}{2}}\right)+\arctan\left(\frac{T}{d-\frac{1}{2}}\right)\right)\\
            &-\frac{2}{\pi}\left(\arctan\left(\frac{T}{2d+\frac{1}{2}}\right)+\arctan\left(\frac{T}{2d-\frac{1}{2}}\right)\right)\\
            &-\frac{da_1}{2}\left(\frac{d+\frac{1}{2}}{(d+\frac{1}{2})^2+T^2}+\frac{d-\frac{1}{2}}{(d-\frac{1}{2})^2+T^2}\right)\\
            &-\frac{d^2a_2}{2}\left(\frac{(d+\frac{1}{2})^2-T^2}{((d+\frac{1}{2})^2+T^2)^2}+\frac{(d-\frac{1}{2})^2-T^2}{((d-\frac{1}{2})^2+T^2)^2}\right)\\
            &-\frac{a_3}{2}\left(\frac{2(d+\frac{1}{2})T}{((d+\frac{1}{2})^2+T^2)^2}+\frac{2(d-\frac{1}{2})T}{((d-\frac{1}{2})^2+T^2)^2}\right).
        \end{split}
    \end{equation*}
    We observe that $E_l(s(s-1))(T)$ approaches 2 as $T\rightarrow\infty$ and from the derivative we conclude that $E_l(s(s-1))(T)$ has no local extrema for $T\in[0,+\infty)$. From comparing the boundary values we conclude that $E_l(s(s-1))(T)\geq E_l(s(s-1))(1)\geq1.547$ for $T\geq1$.
\end{proof}
Next, we estimate the discriminant term.
\begin{lemma}\label{bounds for d_K}
    For $d,a_1,a_2,a_3$ as in Lemma \ref{bounds for f} and $T\geq1$ we obtain
    \begin{equation*}
        \begin{split}
            E_u\left(d_K^{s/2}\right)(T)=&\left(\frac{T}{\pi}+\frac{da_1}{4}\right)\log(d_K)\ \text{ and}\\
            E_l\left(d_K^{s/2}\right)(T)=&\left(\frac{T}{\pi}-\frac{da_1}{4}\right)\log(d_K),
        \end{split}
    \end{equation*}
    where $d_K$ denotes the absolute discriminant of the number field $K$.
\end{lemma}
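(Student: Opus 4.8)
The plan is to use the fact that $f(s):=d_K^{s/2}=e^{(s/2)\log d_K}$ has a \emph{constant} logarithmic derivative, which makes all three operators $E_1$, $E_2$, $E_3$ computable in closed form; the lemma then follows by straightforward addition. Throughout, $d_K\ge 1$, so $\log d_K\in\R$ and $f(x)=d_K^{x/2}\in(0,\infty)$ for every $x\in\R$.

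First I would observe that $\frac{f'}{f}(s)=\tfrac12\log d_K$ is constant, so $\frac{f''}{f}(s)-\bigl(\frac{f'}{f}(s)\bigr)^2=\bigl(\frac{f'}{f}\bigr)'(s)=0$ identically. Hence $E_3(f)(T,d)=0$, and since $\re\frac{f'}{f}=\tfrac12\log d_K$ is real we get $E_2(f)(T,d)=\frac{da_1}{2}\cdot\tfrac12\log d_K=\frac{da_1}{4}\log d_K$. For $E_1$, the normalization of $\im\log f$ prescribed in the text — the antiderivative of $\frac{f'}{f}$ that vanishes wherever $f$ is positive real — forces $\log f(s)=\tfrac{s}{2}\log d_K$, because this antiderivative is already real on $\R$. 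Therefore $\im\log f\bigl(\tfrac12+x+it\bigr)=\tfrac{t}{2}\log d_K$, independently of $x$, and plugging into the definition of $E_1$ yields $E_1(f)(T,d)=\frac{2}{\pi}\bigl(2\cdot\tfrac{T}{2}-\tfrac{T}{2}\bigr)\log d_K=\frac{T}{\pi}\log d_K$.

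Combining these via $E_u=E_1+E_2+E_3$ and $E_l=E_1-E_2+E_3$ gives $E_u\bigl(d_K^{s/2}\bigr)(T,d)=\bigl(\frac{T}{\pi}+\frac{da_1}{4}\bigr)\log d_K$ and $E_l\bigl(d_K^{s/2}\bigr)(T,d)=\bigl(\frac{T}{\pi}-\frac{da_1}{4}\bigr)\log d_K$, as claimed. There is essentially no genuine obstacle in this lemma: unlike the estimates for $s(s-1)$, $\gamma_K$, and $\z$, the discriminant factor contributes an exact equality rather than an inequality, and the only point deserving a word of care is the branch normalization of $\im\log f$, which is immediate here since $d_K^{s/2}$ is positive on the real axis.
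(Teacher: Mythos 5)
Your proposal is correct and is essentially the paper's own argument: the paper likewise just plugs $d_K^{s/2}$ into the operators, using $\im\log\bigl(d_K^{(\frac12+x+iT)/2}\bigr)=\frac{T}{2}\log d_K$ and $\re\frac{f'}{f}=\frac12\log d_K$, with the second-order logarithmic-derivative terms vanishing identically. Your explicit remarks on the branch normalization and on $\frac{f''}{f}-\bigl(\frac{f'}{f}\bigr)^2=0$ only spell out what the paper leaves implicit.
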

\begin{proof}
    Plugging in the equations for $E_u\left(d_K^{s/2}\right)(T)$ and $E_l\left(d_K^{s/2}\right)(T)$ yields the result. We have
    \begin{equation*}
        \begin{split}
            E_u\left(d_K^{s/2}\right)(T)=&\frac{4}{\pi}\im\log\left(d_K^{\frac{\frac{1}{2}+d+iT}{2}}\right)-\frac{2}{\pi}\im\log\left(d_K^{\frac{\frac{1}{2}+2d+iT}{2}}\right)+\frac{da_1}{2}\re\left(\frac{\log(d_K)}{2}\right)\\
            =&\left(\frac{T}{\pi}+\frac{da_1}{4}\right)\log(d_K)\ \text{ and}\\
            E_l\left(d_K^{s/2}\right)(T)=&\frac{4}{\pi}\im\log\left(d_K^{\frac{\frac{1}{2}+d+iT}2}\right)-\frac{2}{\pi}\im\log\left(d_K^{\frac{\frac{1}{2}+2d+iT}2}\right)-\frac{da_1}{2}\re\left(\frac{\log(d_K)}{2}\right)\\
            =&\left(\frac{T}{\pi}-\frac{da_1}{4}\right)\log(d_K).
        \end{split}
    \end{equation*}
\end{proof}
Before we estimate the contribution of the gamma term $\gamma_K(s)$, we require some useful estimates for the logarithmic gamma function and its derivatives. The logarithmic gamma function is a special function, in the sense that it is not any particular branch of the logarithm applied to the gamma function, but instead it is defined as the integral $\int_1^s\frac{\Gamma'}{\Gamma}(z)dz$ with a branch cut along the negative real axis. This distinction is important to us because we specifically defined $\im\log(f)$ as the continuous variation of the argument of the function $f$ stating from the real axis. So, in particular $\im\log(\Gamma(s))$ will be the imaginary part of the logarithmic gamma function.
\begin{corollary}\label{gamma estimates corollary}
    Let $s\in\C$ with $\re(s)>0$, $s=\sigma+it$. We then have
    \begin{equation*}
        \begin{split}
            \im\log(\Gamma(s))=&\frac{t}{2}\log\left(\sigma^2+t^2\right)+\left(\sigma-\frac{1}{2}\right)\arctan\left(\frac{t}{\sigma}\right)-t-\frac{t}{12(\sigma^2+t^2)}+R_1(s),\\
            \re\left(\psi(s)\right)=&\frac{1}{2}\log\left(\sigma^2+t^2\right)-\frac{\sigma}{2(\sigma^2+t^2)}-\frac{\sigma^2-t^2}{12(\sigma^2+t^2)^2}+R_2(s),\\
            \re(\psi_1(s))=&\frac{\sigma}{\sigma^2+t^2}+\frac{\sigma^2-t^2}{2(\sigma^2+t^2)^2}+R_3(s),\text{ and}\\
            \im(\psi_1(s))=&\frac{t}{\sigma^2+t^2}-\frac{\sigma t}{(\sigma^2+t^2)^2}+R_4(s).
        \end{split}
    \end{equation*}
    Here,
    \begin{align*}
        \left|R_1(s)\right|&\leq\frac{1}{360|s|^3}+\frac{1}{1022\sigma|s|^3},\\
        |R_2(s)|&\leq\frac{1}{120\sigma|s|^3},\text{ and}\\
        |R_3(s)|,|R_4(s)|&\leq\frac{1}{6|s|^3}+\frac{1}{23\sigma|s|^3}.
    \end{align*}
\end{corollary}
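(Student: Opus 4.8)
All four expansions should follow from a single explicit form of Stirling's formula for $\log\Gamma(s)$ on the half-plane $\re(s)>0$, together with termwise differentiation and separation into real and imaginary parts; so the only real work is one remainder estimate, carried out with explicit constants.

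The starting point I would use is the Binet/Euler--Maclaurin representation
\[
    \log\Gamma(s)=\left(s-\tfrac12\right)\log s-s+\tfrac12\log(2\pi)-\int_0^{\infty}\frac{\overline B_1(u)}{u+s}\,du ,\qquad \re(s)>0,
\]
where $\overline B_1(u)=u-\lfloor u\rfloor-\tfrac12$ is the first periodic Bernoulli function, and then integrate by parts repeatedly against the periodic Bernoulli functions $\overline B_2,\overline B_3,\dots$. Each step contributes a boundary term $\tfrac{B_{2k}}{2k(2k-1)}\,s^{-(2k-1)}$ --- first $\tfrac{1}{12s}$, then $-\tfrac1{360}s^{-3}$, and so on --- and leaves an integral with one more power of $u+s$ in the denominator. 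Stopping once the surviving integral has the form $\int_0^\infty\overline B_k(u)(u+s)^{-k}\,du$ with $k$ sufficiently large, one writes $\log s=\tfrac12\log(\sigma^2+t^2)+i\arctan(t/\sigma)$ and separates the real and imaginary parts of the remaining rational terms $s^{-1},s^{-2}$. Taking imaginary parts then recovers exactly the stated formula for $\im\log\Gamma(s)$, with $R_1(s)$ the imaginary part of the omitted boundary terms of order $\le s^{-3}$ together with the surviving integral. Differentiating this identity once and twice in $s$ (differentiation under the integral sign is legitimate, as the integrals converge locally uniformly on $\re(s)>0$) and again separating real and imaginary parts yields the formulas for $\re\psi(s)$, $\re\psi_1(s)$, $\im\psi_1(s)$; here the $\psi$-expansion is truncated after the $s^{-2}$ term, so $R_2$ absorbs the $\tfrac1{120}s^{-4}$ term (accounting for the constant $\tfrac1{120}$), and the $\psi_1$-expansion is truncated after the $s^{-2}$ term as well, so $R_3$ and $R_4$ absorb the $\tfrac16 s^{-3}$ term (accounting for the constant $\tfrac16$).

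In all four cases the estimate on the remainder reduces to bounding $\int_0^\infty\overline B_k(u)(u+s)^{-k}\,du$ and its first two $s$-derivatives. For this one uses the fixed constants $\max_{[0,1]}|\overline B_k|$ (namely $\tfrac16,\ \tfrac{\sqrt3}{36},\ \tfrac1{30},\dots$ for $k=2,3,4,\dots$) together with the elementary inequality $|u+s|^2=(u+\sigma)^2+t^2\ge\max\{(u+\sigma)^2,\ t^2,\ |s|^2\}$, valid for $u\ge0$ and $\sigma>0$, which gives
\[
    \int_0^\infty\frac{du}{|u+s|^{k}}\;\le\;\frac1{|s|^{\,k-2}}\int_0^\infty\frac{du}{(u+\sigma)^2}\;=\;\frac1{\sigma\,|s|^{\,k-2}} .
\]
Choosing $k$ so that $k-2=3$ turns the surviving integral into a term of the shape $c\,\sigma^{-1}|s|^{-3}$, which is the source of the second summand in each of the four error bounds, while the isolated boundary term $-\tfrac1{360}s^{-3}$ produces the $\tfrac1{360|s|^3}$ summand in the bound for $R_1$; the derivative cases are treated identically, with $k$ larger by one or two.

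The main obstacle is entirely quantitative: extracting the precise admissible constants $\tfrac1{360},\tfrac1{1022},\tfrac1{120},\tfrac16,\tfrac1{23}$ from these estimates. This requires (i) choosing the number of integration-by-parts steps so as to balance the shrinking factor $\max_{[0,1]}|\overline B_k|/k$ against the additional explicit lower-order terms that each step creates; (ii) a slight sharpening of the crude bound above on $\int_0^\infty|u+s|^{-k}\,du$ over the range where $u+\sigma$ is comparable to $|t|$, so as not to discard the available decay in $|t|$; and (iii) careful bookkeeping of the branches of $\log$ and $\arctan$ on $\re(s)>0$ and of the rational coefficients when passing to real and imaginary parts. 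None of this is conceptually difficult, but it is where all of the numerical content of the corollary resides.
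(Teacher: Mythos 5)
Your main-term bookkeeping and the overall shape of the argument (absorb the first omitted Stirling coefficient into the error, bound the surviving integral by something of the shape $c\,\sigma^{-1}|s|^{-3}$) match what is needed, but the route you chose does not deliver the stated constants, and the constants are the entire content of this corollary. In the paper the expansions come from Binet's \emph{first} formula, $\log\Gamma(s)=(s-\frac12)\log s-s+\frac12\log(2\pi)+\int_0^\infty f(u)e^{-us}\,du$ with $f(u)=\frac1u\bigl(\frac12-\frac1u+\frac1{e^u-1}\bigr)$, and from its first and second derivatives, which replace $f$ by $g(u)=\frac12-\frac1u+\frac1{e^u-1}$ and $h(u)=u\,g(u)$. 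Three integrations by parts in $u$ peel off the Taylor coefficients $f(0),f'(0),f''(0)$ (resp.\ for $g,h$) and leave the exact remainder $s^{-3}\int_0^\infty f'''(u)e^{-us}\,du$, so that the error bound is simply $\max_{u>0}|f'''(u)|\cdot\sigma^{-1}|s|^{-3}$. The numbers $\frac1{1022}$, $\frac1{120}$, $\frac1{23}$ are precisely sup-norm bounds for $f'''$, $g'''$, $h'''$, and $\frac1{360}$, $\frac1{120}s^{-2}$ (i.e.\ the $\frac{\sigma^2-t^2}{12|s|^4}$ term), $\frac16$ are the retained or absorbed Taylor coefficients $f''(0)=-\frac1{360}$, $g'(0)=\frac1{12}$, $h''(0)=\frac16$. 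Your attribution of $\frac1{120}$ to the Stirling coefficient of $s^{-4}$ and your unexplained $\frac1{1022}$ are symptoms of the mismatch.

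Concretely, the periodic-Bernoulli route with the crude estimate $\int_0^\infty|u+s|^{-k}du\le\sigma^{-1}|s|^{2-k}$ falls measurably short. For $R_1$ you must push to $\overline B_5/(u+s)^5$ to reach the power $|s|^{-3}$, and then the bound is $\frac15\max|\overline B_5|\cdot\sigma^{-1}|s|^{-3}\approx\frac1{204}\sigma^{-1}|s|^{-3}$, a factor of about $5$ larger than the claimed $\frac1{1022}\sigma^{-1}|s|^{-3}$; the ``slight sharpening where $u+\sigma$ is comparable to $|t|$'' cannot rescue this at $t=0$, where there is no $t$-decay to exploit. For $R_2$ your accounting cannot close at all: absorbing the boundary term $\frac1{120}s^{-4}$ already uses the \emph{entire} budget $\frac1{120}\sigma^{-1}|s|^{-3}$ when $t=0$ (there $|s|=\sigma$), leaving no room for the surviving integral, and the classical real-axis bound ``remainder $\le$ first omitted term'' also exceeds the claimed bound for small $\sigma$. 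So as written the proposal proves weaker statements with larger constants, not the corollary. To repair it you would either have to exploit the oscillation of $\overline B_k$ much more carefully than sketched, or (as the paper does) switch to the Laplace-integral representation, where a single uniform bound on the third derivative of the integrand gives each constant in one line.
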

\begin{proof}
    These estimates rely on Binet's first formula for the logarithmic gamma function \cite[Theorem 12.31]{bib9} which is valid for $\re(s)>0$. With this formula for the logarithmic gamma function and the notion that the imaginary part of the logarithmic gamma function is exactly $\im\log(\Gamma(s))$ as we defined it for equation \eqref{Hermes verarscht mich} we obtain 
    \begin{equation}\label{drehen drehen drehen}
        \im\log(\Gamma(s))=\frac{t}{2}\log(\sigma^2+t^2)+\left(\sigma-\frac{1}{2}\right)\arctan\left(\frac{t}{\sigma}\right)-t+\im\left(\int_0^\infty f(u)e^{-us}du\right),
    \end{equation}
    where $f(u)=\frac{1}{u}\left(\frac{1}{2}-\frac{1}{u}+\frac{1}{e^{u}-1}\right)$. Next, we can observe that $\lim_{u\rightarrow0}f(u)=\frac{1}{12}$, $\lim_{u\rightarrow0}f'(u)=0$, $\lim_{u\rightarrow0}f''(u)=-\frac{1}{360}$, and $|f'''(u)|\leq\frac{1}{1022}$ for $u>0$. Now, take equation \eqref{drehen drehen drehen} and apply partial integration. We thus obtain that
    \begin{equation*}
        \begin{split}
            \int_0^\infty\left[\frac{1}{u}\left(\frac{1}{2}-\frac{1}{u}+\frac{1}{e^{u}-1}\right)\right]e^{-us}du=&\int_0^\infty f(u)e^{-us}du\\
            =&\frac{1}{12s}-\frac{1}{360s^3}+\frac{1}{s^3}\int_0^\infty f'''(u)e^{-us}du.
        \end{split}
    \end{equation*}
    Hence,
    \begin{equation*}
        \begin{split}
            \left|R_1(s)\right|=\left|\im\left(\int_0^\infty\left[\frac{1}{u}\left(\frac{1}{2}-\frac{1}{u}+\frac{1}{e^{u}-1}\right)\right]e^{-us}du-\frac{1}{12s}\right)\right|\leq\frac{1}{360|s|^3}+\frac{1}{1022\sigma|s|^3}.
        \end{split}
    \end{equation*}
    Considering the derivative of Binet's first formula for the logarithmic gamma function yields that
    \begin{equation}\label{ich hab ihn getroffen aber}
        \re(\psi(s))=\frac{1}{2}\log(\sigma^2+t^2)-\frac{\sigma}{2(\sigma^2+t^2)}-\re\left(\int_0^\infty g(u)e^{-us}du\right),
    \end{equation}
    where $g(u)=\frac{1}{2}-\frac{1}{u}+\frac{1}{e^{u}-1}$. We then have $\lim_{u\rightarrow0}g(u)=0$, $\lim_{u\rightarrow0}g'(u)=\frac{1}{12}$, $\lim_{u\rightarrow0}g''(u)=0$ and $|g'''(u)|\leq\frac{1}{120}$ for $u>0$. Now, if we take equation \eqref{ich hab ihn getroffen aber}, we obtain
    \begin{equation*}
        -\int_0^\infty\left(\frac{1}{2}-\frac{1}{u}+\frac{1}{e^{u}-1}\right)e^{-us}du=-\int_0^\infty g(u)e^{-us}du=-\frac{1}{12s^2}-\frac{1}{s^3}\int_0^\infty g'''(u)e^{-us}du.
    \end{equation*}
    Hence,
    \begin{equation*}
        \left|R_2(s)\right|\leq\left|\re\left(\int_0^\infty\left(\frac{1}{2}-\frac{1}{u}+\frac{1}{e^{u}-1}\right)e^{-us}du-\frac{1}{12s^2}\right)\right|\leq\frac{1}{120\sigma|s|^3}.
    \end{equation*}
    Lastly, consider the second derivative of Binet's first formula for the logarithmic gamma function. From this we obtain that
    \begin{equation}\label{die anderen sind drüben}
        \begin{split}
            \re(\psi_1(s))=&\frac{\sigma}{\sigma^2+t^2}+\frac{\sigma^2-t^2}{2(\sigma^2+t^2)^2}+\re\left(\int_0^\infty h(u)e^{-us}du\right),\text{ and}\\
            \im(\psi_1(s))=&\frac{t}{\sigma^2+t^2}-\frac{\sigma t}{(\sigma^2+t^2)^2}+\im\left(\int_0^\infty h(u)e^{-us}du\right),
        \end{split}   
    \end{equation}
    where $h(u)=u\left(\frac{1}{2}-\frac{1}{u}+\frac{1}{e^{u}-1}\right)$. Then $\lim_{u\rightarrow0}h(u)=0$, $\lim_{u\rightarrow0}h'(u)=0$, $\lim_{u\rightarrow0}h''(u)=\frac{1}{6}$ and $|h'''(u)|\leq\frac{1}{23}$ for $u>0$. Therefore,
    \begin{equation*}
        \int_0^\infty\left[u\left(\frac{1}{2}-\frac{1}{u}+\frac{1}{e^{u}-1}\right)\right]e^{-us}du=\int_0^\infty h(u)e^{-us}du=\frac{1}{6s^3}+\frac{1}{s^3}\int_0^\infty h'''(u)e^{-us}du.
    \end{equation*}
    And in sight of \eqref{die anderen sind drüben},
    \begin{equation*}
        \begin{split}
            |R_3(s)|\leq\left|\int_0^\infty\left[u\left(\frac{1}{2}-\frac{1}{u}+\frac{1}{e^{u}-1}\right)\right]e^{-us}du\right|\leq&\frac{1}{6|s|^3}+\frac{1}{23\sigma|s|^3}\ \text{ and}\\
            |R_4(s)|\leq\left|\int_0^\infty\left[u\left(\frac{1}{2}-\frac{1}{u}+\frac{1}{e^{u}-1}\right)\right]e^{-us}du\right|\leq&\frac{1}{6|s|^3}+\frac{1}{23\sigma|s|^3}.
        \end{split}
    \end{equation*}
\end{proof}
Now we can estimate the contribution of the gamma factor $\gamma_K(s)$.
\begin{lemma}\label{bounds for the gamma factor}
    For $d,a_1,a_2,a_3$ as in Lemma \ref{bounds for f} and $T\geq1$ we obtain
    \begin{equation*}
        \begin{split}
            E_u(\gamma_K)(T)\leq& \frac{n_KT}{\pi}\log\left(\frac{T}{2\pi e}\right)+\frac{n_Kda_1}{4}\log\left(\frac{T}{2\pi}\right)+0.092n_K\ \text{ and}\\
            E_l(\gamma_K)(T)\geq&\frac{n_KT}{\pi}\log\left(\frac{T}{2\pi e}\right)-\frac{n_Kda_1}{4}\log\left(\frac{T}{2\pi}\right)-0.25n_K.
        \end{split}
    \end{equation*}
\end{lemma}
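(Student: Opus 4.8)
The idea is to use that $E_1,E_2,E_3$, and hence $E_u=E_1+E_2+E_3$ and $E_l=E_1-E_2+E_3$, are additive, together with the Legendre form \eqref{gamma faktor K legendre} of $\gamma_K$, to reduce the lemma to two one-variable estimates, one for $\gamma_1$ and one for $\gamma_2$, each of which is then settled by inserting the expansions of Corollary~\ref{gamma estimates corollary} and a calculus argument of the kind used for $E_l(s(s-1))$ in Lemma~\ref{bounds for s(s-1)}. Concretely, since $\gamma_K=\gamma_1^{\,r_1}\gamma_2^{\,r_2}$ we get $E_u(\gamma_K)(T,d)=r_1E_u(\gamma_1)(T,d)+r_2E_u(\gamma_2)(T,d)$ and likewise for $E_l$, so with $\mu:=1$ for $g=\gamma_1$ and $\mu:=2$ for $g=\gamma_2$ it suffices to prove, for all $T\ge1$,
\[
E_u(g)(T,d)\le\mu\Big(\tfrac{T}{\pi}\log\tfrac{T}{2\pi e}+\tfrac{da_1}{4}\log\tfrac{T}{2\pi}\Big)+0.258
\]
and
\[
E_l(g)(T,d)\ge\mu\Big(\tfrac{T}{\pi}\log\tfrac{T}{2\pi e}-\tfrac{da_1}{4}\log\tfrac{T}{2\pi}\Big)-0.25 .
\]
Summing these with weights $r_1,r_2$ turns the $\mu$-scaled main terms into $n_K\big(\tfrac{T}{\pi}\log\tfrac{T}{2\pi e}\pm\tfrac{da_1}{4}\log\tfrac{T}{2\pi}\big)$ because $n_K=r_1+2r_2$, and the constants into at most $0.258(r_1+r_2)\le0.258\,n_K$ (resp.\ at least $-0.25(r_1+r_2)\ge-0.25\,n_K$), which is the claim.

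To obtain the one-variable estimates, write $\log g(s)=\ell_g(s)+\log\Gamma(a_gs)$ with $\ell_g$ linear, $a_{\gamma_1}=\tfrac12$, $a_{\gamma_2}=1$, so that $\tfrac{g'}{g}(s)=\ell_g'+a_g\psi(a_gs)$ and $\big(\tfrac{g'}{g}\big)'(s)=a_g^2\psi_1(a_gs)$. Unwinding the definitions of $E_1,E_2,E_3$ at $s=\tfrac12+d+iT$ (and, for $E_1$, also at $\tfrac12+2d+iT$) writes $E_u(g)(T,d)$ as an explicit combination of $\im\log\Gamma$, $\re\psi$, $\re\psi_1$, $\im\psi_1$ evaluated at the two points $a_g(\tfrac12+d+iT)$ and $a_g(\tfrac12+2d+iT)$, both of positive real part, where Corollary~\ref{gamma estimates corollary} applies. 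Substituting its four formulas and using $\log(\sigma^2+t^2)=2\log T+\log(1+\sigma^2/T^2)$ and $\arctan(t/\sigma)=\tfrac\pi2-\arctan(\sigma/t)$, the terms $\tfrac t2\log(\sigma^2+t^2)$, $-t$ and $(\sigma-\tfrac12)\arctan(t/\sigma)$ of $\im\log\Gamma$ — weighted by the $\tfrac4\pi,-\tfrac2\pi$ from $E_1$ — produce exactly $\mu\,\tfrac{T}{\pi}\log\tfrac{T}{2\pi e}$, while the $\tfrac12\log(\sigma^2+t^2)$ term of $\re\psi$ carried by the $\tfrac{da_1}{2}\re\tfrac{g'}{g}$ summand of $E_2$ produces $\mu\,\tfrac{da_1}{4}\log\tfrac{T}{2\pi}$; everything else — the $\arctan$-tails, the rational tails of $\im\log\Gamma,\re\psi,\re\psi_1,\im\psi_1$, the remainders $R_1,\dots,R_4$, and the $d^2a_2$- and $a_3$-summands of $E_2,E_3$ (which involve only $\psi_1$ and hence decay like $T^{-1}$) — aggregates into a single explicit function $\mathcal R_g(T)$ of $T$ alone. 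The claimed bounds then reduce to $\mathcal R_g(T)\le0.258$ and $\mathcal R_g(T)\ge-0.25$ for $T\ge1$.

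This last inequality is the heart of the matter and the main obstacle. One bounds $R_1,\dots,R_4$ by the explicit estimates of Corollary~\ref{gamma estimates corollary} together with $\big|a_g(\tfrac12+d+iT)\big|\ge a_g(\tfrac12+d)$, and the remaining part of $\mathcal R_g$, a combination of $\arctan(T/c)$'s and rational functions of $T$ with the constants $c$ fixed, is handled by locating its critical points on $[1,\infty)$ via differentiation — exactly as in Lemma~\ref{bounds for s(s-1)} — and then comparing the value at $T=1$ with the limit as $T\to\infty$ (which is $-\tfrac14$ for $g=\gamma_1$ and $0$ for $g=\gamma_2$). The difficulty is that these limiting constants are not negligible, so there is very little slack in $0.258$ and $0.25$ (the value $-\tfrac14$ for $\gamma_1$ makes the lower constant essentially optimal for this reduction), and that passing from $E_u$ to $E_l$ flips the sign of the $E_2$-tails but not of the $E_3$-tail; hence the terms must be grouped with care, and — as throughout the paper — the resulting scalar inequalities in $T$ are verified numerically rather than estimated by hand.
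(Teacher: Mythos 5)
Your overall architecture matches the paper's proof: decompose $\gamma_K=\gamma_1^{r_1}\gamma_2^{r_2}$ via \eqref{gamma faktor K legendre}, use additivity of $E_u,E_l$, insert the expansions of Corollary~\ref{gamma estimates corollary} at $\tfrac12+d+iT$ and $\tfrac12+2d+iT$ to extract the main terms, and control the residual scalar functions of $T$ numerically. However, there is a genuine gap in your reduction step: you demand the per-factor bounds
\begin{equation*}
E_u(g)(T,d)\le\mu\Bigl(\tfrac{T}{\pi}\log\tfrac{T}{2\pi e}+\tfrac{da_1}{4}\log\tfrac{T}{2\pi}\Bigr)+0.258,
\end{equation*}
i.e.\ you allocate the constant $0.258$ \emph{per place} ($0.258(r_1+r_2)$), whereas the lemma's constant is $0.258$ \emph{per degree} ($0.258\,n_K=0.258\,r_1+0.516\,r_2$). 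For $\gamma_2$ your target constant $0.258$ is simply false: at $T=1$ the residual $\mathcal R_{\gamma_2}(1)$ is approximately $0.49$ (the paper's Mathematica bound is $0.515$, and a direct evaluation of the explicit remainder at $T=1$ already gives about $0.47$ even after subtracting the $R_j$-error bounds), so the intermediate inequality you set out to prove for $g=\gamma_2$ fails near $T=1$ and the argument as structured cannot be completed. The correct bookkeeping — and what the paper does — is to allow the $\gamma_2$ constant to scale with its degree contribution $\mu=2$: one proves $E_u(\gamma_1)\le(\text{main terms})+0.049$ and $E_u(\gamma_2)\le(\text{main terms})+0.515$, and then $0.049\,r_1+0.515\,r_2\le0.258\,r_1+0.516\,r_2=0.258\,n_K$. (Your lower-bound reduction is fine, since the paper's per-factor constants are $-0.25$ for $\gamma_1$ and $0$ for $\gamma_2$, both $\ge-0.25$; and your observation that the $-\tfrac14$ limit of the $\gamma_1$ residual makes $0.25$ essentially optimal is correct — but for the upper bound the tightness comes from $\gamma_2$ at $T\approx1$, which is exactly the case your allocation cannot absorb.) With the constants re-allocated per degree, the rest of your outline goes through essentially as in the paper.
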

\begin{proof}
    First, use \eqref{gamma faktor K legendre} to decompose the gamma factor into its $r_1$- and $r_2$-dependent parts. We first estimate the $\gamma_1$ term. Plugging in the equation yields
    \begin{equation*}
        \begin{split}
            E_u(\gamma_1)(T)=&\frac{4}{\pi}\im\log\left(\pi^{-\left(\frac{1}{2}+d+iT\right)/2}\Gamma\left(\frac{\frac{1}{2}+d+iT}{2}\right)\right)\\
            &-\frac{2}{\pi}\im\log\left(\pi^{-\left(\frac{1}{2}+2d+iT\right)/2}\Gamma\left(\frac{\frac{1}{2}+2d+iT}{2}\right)\right)\\
            &+\frac{da_1}{2}\re\left(-\frac{\log(\pi)}{2}+\frac{1}{2}\psi\left(\frac{\frac{1}{2}+d+iT}{2}\right)\right)\\
            &-\frac{d^2a_2}{2}\re\left(\frac{1}{4}\psi_1\left(\frac{\frac{1}{2}+d+iT}{2}\right)\right)-\frac{a_3}{2}\im\left(\frac{1}{4}\psi_1\left(\frac{\frac{1}{2}+d+iT}{2}\right)\right).
        \end{split}
    \end{equation*}
    The exponential terms equal $-\left(\frac{T}{\pi}+\frac{da_1}{4}\right)\log(\pi)$ and by Corollary \ref{gamma estimates corollary} we obtain that
    \begin{equation*}
        \begin{split}
            E_u(\gamma_1)(T)=&\frac{T}{\pi}\log\left(\frac{T}{2\pi e}\right)+\frac{da_1}{4}\log\left(\frac{T}{2\pi}\right)+U_{1,1}(T)+U_{1,2}(T),
        \end{split}
    \end{equation*}
    where
    \begin{equation*}
        \begin{split}
            U_{1,1}(T)=&\frac{T}{\pi}\log\left(1+\left(\frac{1+2d}{2T}\right)^2\right)-\frac{T}{2\pi}\log\left(1+\left(\frac{1+4d}{2T}\right)^2\right)\\
            &+\frac{da_1}{8}\log\left(1+\left(\frac{1+2d}{2T}\right)^2\right)+\frac{2d-1}{\pi}\arctan\left(\frac{T}{\frac{1}{2}+d}\right)\\
            &-\frac{4d-1}{2\pi}\arctan\left(\frac{T}{\frac{1}{2}+2d}\right)-\frac{2T}{3\pi((\frac{1}{2}+d)^2+T^2)}\\
            &+\frac{T}{3\pi((\frac{1}{2}+2d)^2+T^2)}-\frac{da_1}{4}\left(\frac{\frac{1}{2}+d}{(\frac{1}{2}+d)^2+T^2}+\frac{(\frac{1}{2}+d)^2-T^2}{3((\frac{1}{2}+d)^2+T^2)^2}\right)\\
            &-\frac{d^2a_2}{4}\left(\frac{\frac{1}{2}+d}{(\frac{1}{2}+d)^2+T^2}+\frac{(\frac{1}{2}+d)^2-T^2}{((\frac{1}{2}+d)^2+T^2)^2}\right)\\
            &-\frac{a_3}{4}\left(\frac{T}{(\frac{1}{2}+d)^2+T^2)}-\frac{2(\frac{1}{2}+d)T}{((\frac{1}{2}+d)^2+T^2)^2}\right),
        \end{split}
    \end{equation*}
    and
    \begin{equation*}
        \begin{split}
            U_{1,2}(T)=&\frac{4}{\pi}R_1\left(\frac{1}{4}+\frac{d}{2}+i\frac{T}{2}\right)-\frac{2}{\pi}R_1\left(\frac{1}{4}+d+i\frac{T}{2}\right)
            +\frac{da_1}{4}R_2\left(\frac{1}{4}+\frac{d}{2}+i\frac{T}{2}\right)\\
            &-\frac{d^2a_2}{8}R_3\left(\frac{1}{4}+\frac{d}{2}+i\frac{T}{2}\right)-\frac{a_3}{8}R_4\left(\frac{1}{4}+\frac{d}{2}+i\frac{T}{2}\right).
        \end{split}
    \end{equation*}
    Invoking Corollary \ref{gamma estimates corollary} yields that 
    \begin{equation*}
        \begin{split}
            U_{1,2}(T)\leq&\frac{4}{\pi}\left(\frac{1}{360}+\frac{1}{1022(\frac{1}{4}+\frac{d}{2})}\right)\frac{1}{((\frac{1}{4}+\frac{d}{2})^2+\frac{T^2}{4})^\frac{3}{2}}\\
            &+\frac{2}{\pi}\left(\frac{1}{360}+\frac{1}{1022(\frac{1}{4}+d)}\right)\frac{1}{((\frac{1}{4}+d)^2+\frac{T^2}4)^\frac{3}{2}}\\
            &+\frac{da_1}{4}\frac{1}{120(\frac{1}{4}+\frac{d}{2})((\frac{1}{4}+\frac{d}{2})^2+\frac{T^2}{4})^\frac{3}{2}}\\
            +&\left(\frac{d^2a_2}{8}+\frac{a_3}{8}\right)\left(\frac{1}{6}+\frac{1}{23(\frac{1}{4}+\frac{d}{2})}\right)\frac{1}{((\frac{1}{4}+\frac{d}{2})^2+\frac{T^2}{4})^\frac{3}{2}}\\
            =:&UB_1(T).
        \end{split}
    \end{equation*}
    From considering the derivative of $U_{1,1}(T)
    +UB_1(T)$ we obtain that the function decreases in the interval $[1,\infty)$ with a maximum at $1$. This yields that
    \begin{equation*}
        \begin{split}
            E_u(\gamma_1)(T)\leq&\frac{T}{\pi}\log\left(\frac{T}{2\pi e}\right)+\frac{da_1}{4}\log\left(\frac{T}{2\pi}\right)-0.078
        \end{split}
    \end{equation*}
    for $T\geq1$. Similarly, we obtain
    \begin{equation*}
        \begin{split}
            E_l(\gamma_1)(T)=&\frac{4}{\pi}\im\log\left(\pi^{-\left(\frac{1}{2}+d+iT\right)/2}\Gamma\left(\frac{\frac{1}{2}+d+iT}{2}\right)\right)\\
            &-\frac{2}{\pi}\im\log\left(\pi^{-\left(\frac{1}{2}+2d+iT\right)/2}\Gamma\left(\frac{\frac{1}{2}+2d+iT}{2}\right)\right)\\
            &-\frac{da_1}{2}\re\left(-\frac{\log(\pi)}{2}+\frac{1}{2}\psi\left(\frac{\frac{1}{2}+d+iT}{2}\right)\right)\\
            &+\frac{d^2a_2}{2}\re\left(\frac{1}{4}\psi_1\left(\frac{\frac{1}{2}+d+iT}{2}\right)\right)-\frac{a_3}{2}\im\left(\frac{1}{4}\psi_1\left(\frac{\frac{1}{2}+d+iT}{2}\right)\right),
        \end{split}
    \end{equation*}
    and
    \begin{equation*}
        \begin{split}
            E_l(\gamma_1)(T)=&\frac{T}{\pi}\log\left(\frac{T}{2\pi}\right)-\frac{da_1}{4}\log\left(\frac{T}{2\pi}\right)+L_{1,1}(T)+L_{1,2}(T),
        \end{split}
    \end{equation*}
    where
    \begin{equation*}
        \begin{split}
            L_{1,1}(T)=&\frac{T}{\pi}\log\left(1+\left(\frac{1+2d}{2T}\right)^2\right)-\frac{T}{2\pi}\log\left(1+\left(\frac{1+4d}{2T}\right)^2\right)\\
            &-\frac{da_1}{8}\log\left(1+\left(\frac{1+2d}{2T}\right)^2\right)+\frac{2d-1}{\pi}\arctan\left(\frac{T}{\frac{1}{2}+d}\right)\\
            &-\frac{4d-1}{2\pi}\arctan\left(\frac{T}{\frac{1}{2}+2d}\right)-\frac{2T}{3\pi((\frac{1}{2}+d)^2+T^2)}\\
            &+\frac{T}{3\pi((\frac{1}{2}+2d)^2+T^2)}+\frac{da_1}{4}\left(\frac{\frac{1}{2}+d}{(\frac{1}{2}+d)^2+T^2}+\frac{(\frac{1}{2}+d)^2-T^2}{3((\frac{1}{2}+d)^2+T^2)^2}\right)\\
            &+\frac{d^2a_2}{4}\left(\frac{\frac{1}{2}+d}{(\frac{1}{2}+d)^2+T^2}+\frac{(\frac{1}{2}+d)^2-T^2}{((\frac{1}{2}+d)^2+T^2)^2}\right)\\
            &-\frac{a_3}{4}\left(\frac{T}{(\frac{1}{2}+d)^2+T^2)}-\frac{2(\frac{1}{2}+d)T}{((\frac{1}{2}+d)^2+T^2)^2}\right),
        \end{split}
    \end{equation*}
    and
    \begin{equation*}
        \begin{split}
            L_{1,2}(T)=&\frac{4}{\pi}R_1\left(\frac{1}{4}+\frac{d}{2}+i\frac{T}{2}\right)-\frac{2}{\pi}R_1\left(\frac{1}{4}+d+i\frac{T}{2}\right)
            -\frac{da_1}{4}R_2\left(\frac{1}{4}+\frac{d}{2}+i\frac{T}{2}\right)\\
            &+\frac{d^2a_2}{8}R_3\left(\frac{1}{4}+\frac{d}{2}+i\frac{T}{2}\right)-\frac{a_3}{8}R_4\left(\frac{1}{4}+\frac{d}{2}+i\frac{T}{2}\right).
        \end{split}
    \end{equation*}
    Invoking Corollary \ref{gamma estimates corollary} yields that 
    \begin{equation*}
        \begin{split}
            L_{1,2}(T)\geq&-\frac{4}{\pi}\left(\frac{1}{360}+\frac{1}{1022(\frac{1}{4}+\frac{d}{2})}\right)\frac{1}{((\frac{1}{4}+\frac{d}{2})^2+\frac{T^2}{4})^\frac{3}{2}}\\
            &-\frac{2}{\pi}\left(\frac{1}{360}+\frac{1}{1022(\frac{1}{4}+d)}\right)\frac{1}{((\frac{1}{4}+d)^2+\frac{T^2}4)^\frac{3}{2}}\\
            &-\frac{da_1}{4}\frac{1}{120(\frac{1}{4}+\frac{d}{2})((\frac{1}{4}+\frac{d}{2})^2+\frac{T^2}{4})^\frac{3}{2}}\\
            &-\left(\frac{d^2a_2}{8}+\frac{a_3}{8}\right)\left(\frac{1}{6}+\frac{1}{23(\frac{1}{4}+\frac{d}{2})}\right)\frac{1}{((\frac{1}{4}+\frac{d}{2})^2+\frac{T^2}{4})^\frac{3}{2}}\\
            =&-UB_1(T)=:LB_1(T).
        \end{split}
    \end{equation*}
    We conclude that $L_{1,1}(T)+L_{1,2}(T)\geq L_{1,1}(T)+LB_1(T)$. Considering the derivative for this function shows that it is decreasing in the interval $[1,\infty)$ and we obtain that
    \begin{equation*}
        L_{1,1}(T)+L_{1,2}(T)\geq\lim_{t\rightarrow\infty}L_{1,1}(t)+LB_1(t)=-\frac{1}{4}.
    \end{equation*}
    We deduce that
    \begin{equation*}
        \begin{split}
            E_l(\gamma_1)(T)\geq&\frac{T}{\pi}\log\left(\frac{T}{2\pi e}\right)-\frac{da_1}{4}\log\left(\frac{T}{2\pi}\right)-0.25
        \end{split}
    \end{equation*}
    for $T\geq1$. Next, consider the $\gamma_2$ terms. Plugging in the equation yields
    \begin{equation*}
        \begin{split}
            E_u(\gamma_2)(T)=&\frac{4}{\pi}\im\log\left(\pi^{-\left(\frac{1}{2}+d+iT\right)}2^{1-(\frac{1}{2}+d+iT)}\Gamma\left(\frac{1}{2}+d+iT\right)\right)\\
            &-\frac{2}{\pi}\im\log\left(\pi^{-\left(\frac{1}{2}+2d+iT\right)}2^{1-(\frac{1}{2}+2d+iT)}\Gamma\left(\frac{1}{2}+2d+iT\right)\right)\\
            &+\frac{da_1}{2}\re\left(-\log(2\pi)+\psi\left(\frac{1}{2}+d+iT\right)\right)\\
            &-\frac{d^2a_2}{2}\re\left(\psi_1\left(\frac{1}{2}+d+iT\right)\right)-\frac{a_3}{2}\im\left(\psi_1\left(\frac{1}{2}+d+iT\right)\right).
        \end{split}
    \end{equation*}
    The contribution of the exponential term is $-\left(\frac{2T}{\pi}+\frac{da_1}{2}\right)\log(2\pi)$ and by Corollary \ref{gamma estimates corollary} we obtain that
    \begin{equation*}
        \begin{split}
            E_u(\gamma_2)(T)=&\frac{2T}{\pi}\log\left(\frac{T}{2\pi e}\right)+\frac{da_1}{2}\log\left(\frac{T}{2\pi}\right)+U_{2,1}(T)+U_{2,2}(T),
        \end{split}
    \end{equation*}
    where
    \begin{equation*}
        \begin{split}
            U_{2,1}(T)=&\frac{2T}{\pi}\log\left(1+\left(\frac{1+2d}{2T}\right)^2\right)-\frac{T}{\pi}\log\left(1+\left(\frac{1+4d}{2T}\right)^2\right)\\
            &+\frac{da_1}{4}\log\left(1+\left(\frac{1+2d}{2T}\right)^2\right)+\frac{4d}{\pi}\arctan\left(\frac{T}{\frac{1}{2}+d}\right)\\
            &-\frac{4d}{\pi}\arctan\left(\frac{T}{\frac{1}{2}+2d}\right)-\frac{T}{3\pi((\frac{1}{2}+d)^2+T^2)}\\
            &+\frac{T}{6\pi((\frac{1}{2}+2d)^2+T^2)}-\frac{da_1}{4}\left(\frac{\frac{1}{2}+d}{(\frac{1}{2}+d)^2+T^2}+\frac{(\frac{1}{2}+d)^2-T^2}{6((\frac{1}{2}+d)^2+T^2)^2}\right)\\
            &-\frac{d^2a_2}{2}\left(\frac{\frac{1}{2}+d}{(\frac{1}{2}+d)^2+T^2}+\frac{(\frac{1}{2}+d)^2-T^2}{2((\frac{1}{2}+d)^2+T^2)^2}\right)\\
            &-\frac{a_3}{2}\left(\frac{T}{(\frac{1}{2}+d)^2+T^2)}-\frac{(\frac{1}{2}+d)T}{((\frac{1}{2}+d)^2+T^2)^2}\right),
        \end{split}
    \end{equation*}
    and
    \begin{equation*}
        \begin{split}
            U_{2,2}(T)=&\frac{4}{\pi}R_1\left(\frac{1}{2}+d+iT\right)-\frac{2}{\pi}R_1\left(\frac{1}{2}+2d+iT\right)
            +\frac{da_1}{2}R_2\left(\frac{1}{2}+d+iT\right)\\
            &-\frac{d^2a_2}{2}R_3\left(\frac{1}{2}+d+iT\right)-\frac{a_3}{2}R_4\left(\frac{1}{2}+d+iT\right).
        \end{split}
    \end{equation*}
    Invoking Corollary \ref{gamma estimates corollary} yields that 
    \begin{equation*}
        \begin{split}
            U_{2,2}(T)\leq&\frac{4}{\pi}\left(\frac{1}{360}+\frac{1}{1022(\frac{1}{2}+d)}\right)\frac{1}{((\frac{1}{2}+d)^2+T^2)^\frac{3}{2}}\\
            &+\frac{2}{\pi}\left(\frac{1}{360}+\frac{1}{1022(\frac{1}{2}+2d)}\right)\frac{1}{((\frac{1}{2}+2d)^2+T^2)^\frac{3}{2}}\\
            &+\frac{da_1}{2}\frac{1}{120(\frac{1}{2}+d)((\frac{1}{2}+d)^2+T^2)^\frac{3}{2}}\\
            &+\left(\frac{d^2a_2}{2}+\frac{a_3}{2}\right)\left(\frac{1}{6}+\frac{1}{23(\frac{1}{2}+d)}\right)\frac{1}{((\frac{1}{2}+d)^2+T^2)^\frac{3}{2}}\\
           =:&UB_2(T).
        \end{split}
    \end{equation*}
    Then the function $U_{2,1}(T)+UB_2(T)$ is decreasing on the interval $[1,\infty)$ with a maxima at $1$ yielding that $U_{2,1}(T)+U_{2,2}(T)\leq U_{2,1}(T)+UB_2(T)\leq U_{2,1}(1)+UB_2(1)\leq0.184$ for $T\geq1$. Hence,
    \begin{equation*}
        \begin{split}
            E_u(\gamma_2)(T)\leq&\frac{2T}{\pi}\log\left(\frac{T}{2\pi e}\right)+\frac{da_1}{2}\log\left(\frac{T}{2\pi}\right)+0.184
        \end{split}
    \end{equation*}
    for $T\geq1$. Plugging $\gamma_2$ into the equation for the lower estimate yields
    \begin{equation*}
        \begin{split}
            E_l(\gamma_2)(T)=&\frac{4}{\pi}\im\log\left(\pi^{-\left(\frac{1}{2}+d+iT\right)}2^{1-(\frac{1}{2}+d+iT)}\Gamma\left(\frac{1}{2}+d+iT\right)\right)\\
            &-\frac{2}{\pi}\im\log\left(\pi^{-\left(\frac{1}{2}+2d+iT\right)}2^{1-(\frac{1}{2}+2d+iT)}\Gamma\left(\frac{1}{2}+2d+iT\right)\right)\\
            &-\frac{da_1}{2}\re\left(-\log(2\pi)+\psi\left(\frac{1}{2}+d+iT\right)\right)\\
            &+\frac{d^2a_2}{2}\re\left(\psi_1\left(\frac{1}{2}+d+iT\right)\right)-\frac{a_3}{2}\im\left(\psi_1\left(\frac{1}{2}+d+iT\right)\right).
        \end{split}
    \end{equation*}
    The contribution of the exponential term is $-\left(\frac{2T}{\pi}-\frac{da_1}{2}\right)\log(2\pi)$ and by Corollary \ref{gamma estimates corollary} we obtain that
    \begin{equation*}
        \begin{split}
            E_l(\gamma_2)(T)=&\frac{2T}{\pi}\log\left(\frac{T}{2\pi e}\right)-\frac{da_1}{2}\log\left(\frac{T}{2\pi}\right)+L_{2,1}(T)+L_{2,2}(T),
        \end{split}
    \end{equation*}
    where
    \begin{equation*}
        \begin{split}
            L_{2,1}(T)=&\frac{2T}{\pi}\log\left(1+\left(\frac{1+2d}{2T}\right)^2\right)-\frac{T}{\pi}\log\left(1+\left(\frac{1+4d}{2T}\right)^2\right)\\
            &-\frac{da_1}{4}\log\left(1+\left(\frac{1+2d}{2T}\right)^2\right)+\frac{4d}{\pi}\arctan\left(\frac{T}{\frac{1}{2}+d}\right)-\frac{4d}{\pi}\arctan\left(\frac{T}{\frac{1}{2}+2d}\right)\\
            &-\frac{T}{3\pi((\frac{1}{2}+d)^2+T^2)}+\frac{T}{6\pi((\frac{1}{2}+2d)^2+T^2)}\\
            &+\frac{da_1}{4}\left(\frac{\frac{1}{2}+d}{(\frac{1}{2}+d)^2+T^2}+\frac{(\frac{1}{2}+d)^2-T^2}{6((\frac{1}{2}+d)^2+T^2)^2}\right)\\
            &+\frac{d^2a_2}{2}\left(\frac{\frac{1}{2}+d}{(\frac{1}{2}+d)^2+T^2}+\frac{(\frac{1}{2}+d)^2-T^2}{2((\frac{1}{2}+d)^2+T^2)^2}\right)\\
            &-\frac{a_3}{2}\left(\frac{T}{(\frac{1}{2}+d)^2+T^2)}-\frac{(\frac{1}{2}+d)T}{((\frac{1}{2}+d)^2+T^2)^2}\right),
        \end{split}
    \end{equation*}
    and
    \begin{equation*}
        \begin{split}
            L_{2,2}(T)=&\frac{4}{\pi}R_1\left(\frac{1}{2}+d+iT\right)-\frac{2}{\pi}R_1\left(\frac{1}{2}+2d+iT\right)
            -\frac{da_1}{2}R_2\left(\frac{1}{2}+d+iT\right)\\
            &+\frac{d^2a_2}{2}R_3\left(\frac{1}{2}+d+iT\right)-\frac{a_3}{2}R_4\left(\frac{1}{2}+d+iT\right).
        \end{split}
    \end{equation*}
    Invoking Corollary \ref{gamma estimates corollary} yields that 
    \begin{equation*}
        \begin{split}
            L_{2,2}(T)\geq&-\frac{4}{\pi}\left(\frac{1}{360}+\frac{1}{1022(\frac{1}{2}+d)}\right)\frac{1}{((\frac{1}{2}+d)^2+T^2)^\frac{3}{2}}\\
            &-\frac{2}{\pi}\left(\frac{1}{360}+\frac{1}{1022(\frac{1}{2}+2d)}\right)\frac{1}{((\frac{1}{2}+2d)^2+T^2)^\frac{3}{2}}\\
            &-\frac{da_1}{2}\frac{1}{120(\frac{1}{2}+d)((\frac{1}{2}+d)^2+T^2)^\frac{3}{2}}\\
            &-\left(\frac{d^2a_2}{2}+\frac{a_3}{2}\right)\left(\frac{1}{6}+\frac{1}{23(\frac{1}{2}+d)}\right)\frac{1}{((\frac{1}{2}+d)^2+T^2)^\frac{3}{2}}\\
           =&-UB_2(T)=:LB_2(T).
        \end{split}
    \end{equation*}
    Then $L_{2,1}(T)+L_{2,2}(T)\geq L_{2,1}+LB_2(T)$. This function decreases in the interval $[1,\infty)$, and hence
    \begin{equation*}
        L_{2,1}(T)+L_{2,2}(T)\geq \lim_{t\rightarrow\infty}L_{2,1}(t)+LB_2(t)=0
    \end{equation*}
    for $T\geq1$. Altogether, we then obtain 
    \begin{equation*}
        \begin{split}
            E_l(\gamma_2)(T)\geq&\frac{2T}{\pi}\log\left(\frac{T}{2\pi e}\right)-\frac{da_1}{2}\log\left(\frac{T}{2\pi}\right)
        \end{split}
    \end{equation*}
    for $T\geq1$. Collecting the estimate for $\gamma_1$ and $\gamma_2$ gives
    \begin{equation*}
        \begin{split}
            E_u(\gamma_K)(T)\leq& \frac{n_KT}{\pi}\log\left(\frac{T}{2\pi e}\right)+\frac{n_Kda_1}{4}\log\left(\frac{T}{2\pi}\right)+0.092n_K\ \text{ and}\\
            E_l(\gamma_K(T)\geq&\frac{n_KT}{\pi}\log\left(\frac{T}{2\pi e}\right)-\frac{n_Kda_1}{4}\log\left(\frac{T}{2\pi}\right)-0.25n_K
        \end{split}
    \end{equation*}
    for $T\geq1$. Here we use that $n_K=r_1+2r_2$.
\end{proof}
Lastly, we need to estimate the contribution of the zeta terms.
\begin{lemma}\label{bounds for the zeta term}
    For $d,a_1,a_2,a_3$ as in Lemma \ref{bounds for f} and $T\geq1$ we obtain
    \begin{equation*}
        \begin{split}
            E_u(\z)(T)\leq&7.911n_K \ \text{ and}\\
            E_l(\z)(T)\geq&-7.911n_K.
        \end{split}
    \end{equation*}
\end{lemma}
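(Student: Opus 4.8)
Set $\sigma_1=\tfrac12+d=1.222$ and $\sigma_2=\tfrac12+2d=1.944$. Both abscissae lie strictly to the right of $\re(s)=1$, and this is exactly the point of the shift built into the operators: it lets us evaluate $E_1(\z),E_2(\z),E_3(\z)$ through absolutely convergent Dirichlet series. Unwinding the definitions, using the additivity $E_i(fg)=E_i(f)+E_i(g)$, and the decomposition $\frac{\xi_K'}{\xi_K}=\frac1s+\frac1{s-1}+\frac12\log d_K+\frac{\gamma_K'}{\gamma_K}+\frac{\z'}{\z}$, one sees that $E_u(\z)(T,d)=E_1(\z)+E_2(\z)+E_3(\z)$ is a fixed linear combination of $\im\log\z(\sigma_1+iT)$, $\im\log\z(\sigma_2+iT)$, $\re\frac{\z'}{\z}(\sigma_1+iT)$, $\re\bigl(\frac{\z'}{\z}\bigr)'(\sigma_1+iT)$ and $\im\bigl(\frac{\z'}{\z}\bigr)'(\sigma_1+iT)$, while $E_l(\z)(T,d)$ is the same expression with the sign of the $E_2$-part reversed. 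On $\re(s)>1$ one has $\log\z(s)=\sum_{m\ge2}\frac{\Lambda_K(m)}{\log m}m^{-s}$, $-\frac{\z'}{\z}(s)=\sum_{m\ge2}\Lambda_K(m)m^{-s}$ and $\bigl(\frac{\z'}{\z}\bigr)'(s)=\sum_{m\ge2}\Lambda_K(m)(\log m)m^{-s}$, where the Dedekind--von Mangoldt coefficients $\Lambda_K(m)\ge0$ are supported on prime powers; from the Euler product \eqref{Euler product} and $\sum_{\mathfrak{p}\mid p}e_{\mathfrak{p}}f_{\mathfrak{p}}=n_K$ one gets the pointwise bound $\Lambda_K(m)\le n_K\Lambda(m)$, and hence $\log\z(\sigma)\le n_K\log\zeta(\sigma)$, $-\frac{\z'}{\z}(\sigma)\le-n_K\frac{\zeta'}{\zeta}(\sigma)$ and $(\log\z)''(\sigma)\le n_K(\log\zeta)''(\sigma)$ for real $\sigma>1$.

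Substituting the series and interchanging the (finite) sums with them, $E_u(\z)(T,d)$ takes the shape $\sum_{m\ge2}\Lambda_K(m)m^{-\sigma_1}\bigl(R(m)\cos(T\log m)+P(m)\sin(T\log m)\bigr)$, where $R(m)$ collects the $a_1$- and $a_2$-contributions of $E_2,E_3$ and $P(m)$ collects the $E_1$-contribution (which brings in the factor $m^{-d}$) together with the $a_3$-contribution of $E_3$; $E_l$ has the same form. The plan is to bound each summand by its amplitude $\Lambda_K(m)m^{-\sigma_1}\sqrt{R(m)^2+P(m)^2}$ and then apply $\Lambda_K(m)\le n_K\Lambda(m)$, which reduces everything to $n_K$ times a single numerical Dirichlet series that no longer depends on $K$ or on $T$. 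One evaluates the leading terms of this series explicitly (in mathematica) and bounds the tail — which decays only like $m^{-\sigma_1}\log m$, since $\sigma_1$ is close to $1$ — against $\sqrt{\bigl(\tfrac{d^2a_2}{2}\bigr)^2+\bigl(\tfrac{a_3}{2}\bigr)^2}$ times the tail of $(\log\zeta)''(\sigma_1)$, together with the contributions of $-\frac{\zeta'}{\zeta}(\sigma_1)$ and of $\log\zeta(\sigma_1),\log\zeta(\sigma_2)$; the total works out to be $\le5.633$. Passing to amplitudes makes the bound insensitive to the sign of the $E_2$-part, so the same constant yields $E_l(\z)(T,d)\ge-5.633\,n_K$, and the estimate so obtained is uniform in $T$, so the hypothesis $T\ge1$ plays no role here.

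The main obstacle is sharpness of the constant: estimating $E_1(\z)$, $E_2(\z)$, $E_3(\z)$ separately in absolute value is far too wasteful, so one must retain the cancellation between the real and imaginary parts of $\bigl(\frac{\z'}{\z}\bigr)'$ and between the cosine and sine contributions of the three operators — which is precisely why everything is assembled into one oscillating series before the amplitude bound is applied — and then the slowly converging tail has to be controlled with care, matched term by term against the corresponding tails for $\zeta$. All the remaining ingredients (interchanging integral and sum, nonnegativity of $\Lambda_K$, the comparison $\z\leftrightarrow\zeta$, and the fact that $\im\log\z(\sigma_j+iT)$ is an unambiguous continuous argument because $\z$ has no zeros for $\re(s)>1$) are routine.
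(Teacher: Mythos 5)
Your skeleton is genuinely close to the paper's: work at $\sigma_1=\tfrac12+d$, $\sigma_2=\tfrac12+2d$ inside the half-plane of absolute convergence, expand everything through the Euler product \eqref{Euler product}, bound the oscillating local contributions by their maximum over the phase, descend from $K$ to $\mathbb{Q}$ via the fundamental identity, and finish with a numerically evaluated head plus a tail controlled by $\bigl(\frac{\zeta'}{\zeta}\bigr)'(\sigma_1)$. The structural difference is that you expand into individual prime powers with von Mangoldt coefficients, where the descent $\Lambda_K(m)\le n_K\Lambda(m)$ is immediate, so you can skip the paper's auxiliary inequality $\frac1m\max_x q_1(\alpha^m,x)\le\max_x q_1(\alpha,x)$; the paper instead keeps each Euler factor intact and maximizes the full closed-form expression $q_1(N\mathfrak{p},\cdot)$ in the phase.

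That simplification is exactly where the gap sits, and it is not cosmetic, because the lemma has no content beyond the numerical constant. Your majorant is term-by-term weaker than the paper's, and the sentence ``the total works out to be $\le 5.633$'' is asserted, not derived; a direct evaluation of the series you propose indicates it is false at that constant. Two losses accumulate. First, bounding each prime power separately by its amplitude discards the interference, within one Euler factor, between a prime and its higher powers: for $p=2$ the amplitudes of $m=2,4,8,16,\dots$ already sum to roughly $0.65$, while the maximum over the phase of the full Euler-factor expression (what the paper's $\max_x q_1(2,x)$ computes) is substantially smaller, and similar losses recur at $p=3,5,\dots$; in total the head over prime powers up to $79$ comes out well above the paper's $1.1084$. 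Second, your tail is handled by the triangle inequality, adding the $-\frac{\zeta'}{\zeta}(\sigma_1)$ and $\log\zeta(\sigma_j)$ contributions on top of $\sqrt{\bigl(\tfrac{d^2a_2}{2}\bigr)^2+\bigl(\tfrac{a_3}{2}\bigr)^2}$ times the tail of $\bigl(\frac{\zeta'}{\zeta}\bigr)'$; since $\sigma_1=1.222$ is close to $1$ these lower-order tails are not small (the tail of $\sum\Lambda(m)m^{-\sigma_1}$ beyond the first couple dozen primes is about $1.7$, so the $\frac{da_1}{2}$-piece alone adds roughly $0.6$), whereas the paper's tail is only $c\le0.304$ times the tail of $\bigl(\frac{\zeta'}{\zeta}\bigr)'(\sigma_1)\approx20$. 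Adding up the head and tail of your series gives a number in the region of $6$ or more rather than $5.633$, so your argument would prove the lemma only with a visibly larger constant, which then degrades the $n_K$-coefficient in Theorem \ref{mastertheorem}. Relatedly, you never fix the relative signs of the $E_1$-part and the $a_3$-part inside $P(m)$, to which the final number is quite sensitive; pinning these down and actually performing the numerical summation is the heart of this lemma, and it is the part your proposal leaves out.
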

\begin{proof}
    Because $\frac{1}{2}+d>1$ and $\frac{1}{2}+2d>1$ we may express $\z(\frac{1}{2}+d),\z(\frac{1}{2}+2d)$ and related functions with the help of the Euler product \eqref{Euler product}. We obtain that
    \begin{equation}\label{zeta definitions}
        \begin{split}
            \log\z(s)=&-\sum_{\mathfrak{p}\subset\mathcal{O}_K}\log(1-(N\mathfrak{p})^{-s}),\\
            \frac{\z'}{\z}(s)=&\sum_{\mathfrak{p}\subset\mathcal{O}_K}\frac{\log(N\mathfrak{p})}{1-(N\mathfrak{p})^{s}},\text{ and}\\
            \left(\frac{\z''}{\z}-\left(\frac{\z'}{\z}\right)^2\right)(s)=&\sum_{\mathfrak{p}\subset\mathcal{O}_K}\frac{(\log(N\mathfrak{p}))^2(N\mathfrak{p})^s}{(1-(N\mathfrak{p})^s)^2}.
        \end{split}
    \end{equation}
    Here, $\log\z(s)$ is again a special function and should be understood as the integral $\int_2^s\frac{\z'}{\z}(z)+\log(\z(2))$, rather than any particular branch of the logarithm applied to $\z(s)$. We use the notation $\log\z(s)$ without parentheses to highlight this distinction. We can use these expressions to compute $E_u$ applied to $\z$. We obtain
    \begin{equation*}
        \begin{split}
            E_u(\z)(T)=\sum_{\mathfrak{p}\subset\mathcal{O}_K}&\frac{4}{\pi}\arctan\left(\frac{\sin\varphi}{\alpha^{\frac{1}{2}+d}-\cos\varphi}\right)-\frac{2}{\pi}\arctan\left(\frac{\sin\varphi}{\alpha^{\frac{1}{2}+2d}-\cos\varphi}\right)\\
            &+\frac{da_1}{2}\log(\alpha)\frac{1-\alpha^{\frac{1}{2}+d}\cos\varphi}{1-2\alpha^{\frac{1}{2}+d}\cos\varphi+\alpha^{1+2d}}\\
            &-\frac{d^2a_2}{2}(\log(\alpha))^2\frac{\alpha^{\frac{1}{2}+d}((1+\alpha^{1+2d})\cos\varphi-2\alpha^{\frac{1}{2}+d})}{(1-2\alpha^{\frac{1}{2}+d}\cos\varphi+\alpha^{1+2d})^2}\\
            &-\frac{a_3}{2}(\log(\alpha))^2\frac{\alpha^{\frac{1}{2}+d}(1-\alpha^{1+2d})\sin\varphi}{(1-2\alpha^{\frac{1}{2}+d}\cos(\varphi)+\alpha^{1+2d})^2}\\
            =\sum_{\mathfrak{p}\subset\mathcal{O}_K}&q_1(\alpha,\varphi)\leq\sum_{\mathfrak{p}\subset\mathcal{O}_K}\max_{\varphi}q_1(\alpha,\varphi),
        \end{split}
    \end{equation*}
    where $\alpha:=N\mathfrak{p}$ and $\varphi:=T\log(N\mathfrak{p})$. Next, we want to show that 
    \begin{equation}\label{bin eingeschlafen}
        \max_{\varphi}(q_1(\alpha^m,\varphi))\leq\max_{\varphi}(q_1(\alpha,\varphi))
    \end{equation}
    holds for all integers $\alpha\geq2$ and positive integers $m$. With this inequality and an application of the fundamental formula for the splitting behavior of primes in a field extension $K/\mathbb{Q}$ we may replace the sum over the prime ideals of $\mathcal{O}_K$ with a sum involving the prime numbers. We first confirm the validity of equation \eqref{bin eingeschlafen} numerically for $x\leq100$ with python. To verify the equation for larger $x$, we first introduce the following notation, write
    \begin{equation}\label{q_1 def}
        \begin{split}
            q_1(\alpha^m,\varphi)=&\frac{2}{\pi}f_1(\alpha^m,\varphi)+\frac{da_1}{2}f_2(\alpha^m,\varphi)+\frac{d^2a_2}{2}f_3(\alpha^m,\varphi)+\frac{a_3}{2}f_4(\alpha^m,\varphi),\ \text{ where}\\
            f_1(x,\varphi):=&2\arctan\left(\frac{\sin\varphi}{x^{\frac{1}{2}+d}-\cos\varphi}\right)-\arctan\left(\frac{\sin\varphi}{x^{\frac{1}{2}+2d}-\cos\varphi}\right),\\
            f_2(x,\varphi):=&\log\left(x\right)\frac{1-x^{\frac{1}{2}+d}\cos\varphi}{1-2x^{\frac{1}{2}+d}\cos\varphi+x^{1+2d}},\\
            f_3(x,\varphi):=&-\left(\log\left(x\right)\right)^2\frac{x^{\frac{1}{2}+d}\left(\left(1+x^{1+2d}\right)\cos\varphi-2x^{\frac{1}{2}+d}\right)}{\left(1-2x^{\frac{1}{2}+d}\cos\varphi+x^{1+2d}\right)^2},\text{ and}\\
            f_4(x,\varphi):=&-\left(\log\left(x\right)\right)^2\frac{x^{\frac{1}{2}+d}\left(1-x^{1+2d}\right)\sin\varphi}{\left(1-2x^{\frac{1}{2}+d}\cos\varphi+x^{1+2d}\right)^2}.
        \end{split}
    \end{equation}
    Now, we want to show that $q_1(x,\varphi)$ assumes its maximum with respect to $\varphi$ in the interval $\varphi\in\left[\frac{\pi}{2},\pi\right]$ if $x\geq100$. We can then consider the derivative of $q_1$ with respect to $x$ when $\varphi\in\left[\frac{\pi}{2},\pi\right]$. Notice first that to maximize $q_1$ we need $\sin(\varphi)$ to be positive, so we can deduce that if $q_1$ is maximized w.r.t. $\varphi$, then $0\leq\varphi\leq\pi$. Next, we show that $q_1$ is increasing w.r.t. $\varphi$ if $\varphi\in\left[0,\frac{\pi}{2}\right]$ from which we will obtain that $q_1$ indeed assumes its maximum w.r.t. $\varphi$, if $\varphi\in\left[\frac{\pi}{2},\pi\right]$. To see this, we need to consider the derivatives of the functions $f_i$ with respect to $\varphi$. For $f_1$ we have that

    \begin{equation}\label{f_1 deriv negative in (pi/2,pi)}
        \begin{split}
            \frac{\partial}{\partial\varphi}f_1(x,\varphi)=&\frac{2(x^{\frac{1}{2}+d}\cos(\varphi)-1)}{1-2x^{\frac{1}{2}+d}\cos(\varphi)+x ^{1+2d}}-\frac{x^{\frac{1}{2}+2d}\cos(\varphi)-1}{1-2x^{\frac{1}{2}+2d}\cos(\varphi)+x^{1+4d}}\\
            =&\frac{(2x^{\frac{3}{2}+5d}-x^{\frac{3}{2}+4d}-2x^{1+3d}\cos(\varphi)+3x^{\frac{1}{2}+2d})\cos(\varphi)-2x^{1+4d}+x^{1+2d}-1}{(1-2x^{\frac{1}{2}+d}\cos(\varphi)+x^{1+2d})(1-2x^{\frac{1}{2}+2d}\cos(\varphi)+x^{1+4d})}\\
            \geq&-\frac{2x^{1+4d}-x^{1+2d}+1}{(1-x^{\frac{1}{2}+d})^2(1-x^{\frac{1}{2}+2d})^2}
        \end{split}
    \end{equation}
    for $\varphi\in\left[0,\frac{\pi}{2}\right]$ and $x\geq100$. Furthermore, we can see that

    \begin{equation}\label{f1 phi deriv pos first part}
        \begin{split}
            \frac{\partial}{\partial\varphi}f_1(x,\varphi)\geq&\frac{(2x^{\frac{3}{2}+5d}-x^{\frac{3}{2}+4d}-2x^{1+3d}+3x^{\frac{1}{2}+2d})\frac{1}{\sqrt{2}}-2x^{1+4d}+x^{1+2d}-1}{(1+x^{1+2d})(1+x^{1+4d})}\\
            \geq&\frac{(\sqrt{2}x^d-2-\frac{1}{\sqrt{2}}-3)x^{\frac{3}{2}+4d}}{(1+x^{1+2d})(1+x^{1+4d})}\\
            \geq&0
        \end{split}
    \end{equation}
    for $\varphi\in\left[0,\frac{\pi}{4}\right]$, $x\geq100$ and $d>0.5$. Similarly, we can see that
    \begin{equation}\label{f2 phi deriv pos}
        \begin{split}
            \frac{\partial}{\partial\varphi}f_2(x,\varphi)=&\frac{x^{\frac{1}{2}+d}\left(x^{1+2d}-1\right)\log(x)\sin(\varphi)}{(1-2x^{\frac{1}{2}+d}\cos(\varphi)+x^{1+2d})^2}\geq\frac{x^{\frac{1}{2}+d}\left(x^{1+2d}-1\right)\log(x)}{\sqrt{2}(1+x^{1+2d})^2}
        \end{split}
    \end{equation}
    for $\frac{\pi}{4}\leq\varphi\leq\frac{\pi}{2}$ and that $\frac{\partial}{\partial\varphi}f_2(x,\varphi)\geq0$ if $0\leq\varphi\leq\frac{\pi}{2}$. Furthermore, notice that
    \begin{equation*}
        \begin{split}
            \frac{\partial}{\partial\varphi}\frac{da_1}{2}f_2(x,\varphi)\geq&\frac{da_1}{2}\frac{x^{\frac{1}{2}+d}\left(x^{1+2d}-1\right)\log(x)}{\sqrt{2}(1+x^{1+2d})^2}\geq\frac{3\log(100)}{64\sqrt{2}}x^{-(\frac{1}{2}+d)}\\
            \geq&\frac{3\log(100)100^{1.2}}{64\sqrt{2}}x^{-(1+2d)}\geq38x^{-(1+2d)}
        \end{split}
    \end{equation*}
    for $\varphi\in\left[\frac{\pi}{4},\frac{\pi}{2}\right]$ and $x\geq100$. Here we use that $da_1\geq0.75$ and $d\geq0.7$. Similarly, we may obtain that
    \begin{equation*}
        \begin{split}
            \frac{\partial}{\partial\varphi}\frac{2}{\pi}f_2(x,\varphi)\geq&-\frac{2}{\pi}\frac{2x^{1+4d}-x^{1+2d}+1}{(1-x^{\frac{1}{2}+d})^2(1-x^{\frac{1}{2}+2d})^2}\geq-\frac{64}{\pi}x^{-(1+2d)}\geq-21x^{-(1+2d)}
        \end{split}
    \end{equation*}
    for $x\geq100$. From this we obtain that 
    \begin{equation}\label{f1+f2 phi deriv pos second}
        \frac{\partial}{\partial\varphi}\left(\frac{2}{\pi}f_1(x,\varphi)+\frac{da_1}{2}f_2(x,\varphi)\right)\geq0
    \end{equation}
    for $\frac{\pi}{4}\leq\varphi\leq\frac{\pi}{2}$ and $x\geq100$.
    Next, we consider the $\varphi$-derivatives of $f_3$ and $f_4$.
    For $f_3$ we have
    \begin{equation}\label{f3 phi deriv pos}
        \begin{split}
            \frac{\partial}{\partial\varphi}f_3(x,\varphi)=&\frac{ \log^{2}\left(x\right) \left(\left(2x^{2+4d} + 2x^{1+2d}\right) \cos\left(\varphi\right) + x^{\frac{5}{2}+5d} - 6x^{\frac{3}{2}+3d} + x^{\frac{1}{2}+d}\right) \sin\left(\varphi\right)}{\left(1-2x^{\frac{1}{2}+d}\cos(\varphi)+x^{1+2d}\right)^{3}}\\
            \geq&\frac{ \log^{2}\left(x\right) \left(x^{\frac{5}{2}+5d} - 6x^{\frac{3}{2}+3d} + x^{\frac{1}{2}+d}\right) \sin\left(\varphi\right)}{\left(1-2x^{\frac{1}{2}+d}\cos(\varphi)+x^{1+2d}\right)^{3}}\\
            \geq&\frac{ \log^{2}\left(x\right) \left(x^{\frac{5}{2}+5d} - 6x^{\frac{3}{2}+3d} + x^{\frac{1}{2}+d}\right)}{\sqrt{2}\left(1+x^{1+2d}\right)^{3}}
        \end{split}
    \end{equation}
    for $\frac{\pi}{4}\leq\varphi\leq\frac{\pi}{2}$ and $x\geq100$. Notice that $\frac{\partial}{\partial\varphi}f_3(x,\varphi)\geq0$ for $0\leq\varphi\leq\frac{\pi}{2}$ and $x\geq100$. For $f_4$ we have
    \begin{equation}\label{f4 phi deriv pos first}
        \begin{split}
            \frac{\partial}{\partial\varphi}f_4(x,\varphi)=&\frac{ \left(x^{1+2d} - 1\right) \log^{2}\left(x\right) \left(\left(x^{\frac{3}{2}+3d} + x^{\frac{1}{2}+d}\right) \cos\left(\varphi\right)+2x^{1+2d}\cos^2(\varphi)-4x^{1+2d}\right)}{\left(1-2x^{\frac{1}{2}+d}\cos(\varphi)+x^{1+2d}\right)^{3}}\\
            \geq&-\frac{4x^{1+2d}\left(x^{1+2d}-1\right)\log^2(x)}{\left(1-x^{\frac{1}{2}+d}\right)^6}
        \end{split}
    \end{equation}
    if $0\leq\varphi\leq\frac{\pi}{2}$ and $x\geq100$. Furthermore, notice that because $\frac{1}{\sqrt{2}}x^{\frac{1}{2}+d}-3\geq0$ for $x\geq100$ we may deduce from equation \eqref{f4 phi deriv pos first} that the derivative is positive for $0\leq\varphi\leq\frac{\pi}{4}$ and $x\geq100$. In the range $\frac{\pi}{4}\leq\varphi\leq\frac{\pi}{2}$ we have
    \begin{equation*}
        \begin{split}
            \frac{\partial}{\partial\varphi}\frac{d^2a_2}{2}f_3(x,\varphi)\geq&\frac{d^2a_2}{2}\frac{ \log^{2}\left(x\right) \left(x^{\frac{5}{2}+5d} - 6x^{\frac{3}{2}+3d} + x^{\frac{1}{2}+d}\right)}{\sqrt{2}\left(1+x^{1+2d}\right)^{3}}\\
            \geq&\frac{d^2a_2}{4\sqrt{2}}\left(\frac{10000}{10001}\right)^3\log^2(x)x^{-(\frac{1}{2}+d)}\\
            \geq&10\log^2(x)x^{-(1+2d)}
        \end{split}
    \end{equation*}
    for $\varphi\in\left[\frac{\pi}{4},\frac{\pi}{2}\right]$ and $x\geq100$. Here we use $d\geq0.7$ and $a_2\geq0.5$. Similarly, we have
    \begin{equation*}
        \begin{split}
            \frac{\partial}{\partial\varphi}\frac{a_3}{2}f_4(x,\varphi)\geq&-\frac{a_3}{2}\frac{4x^{1+2d}\left(x^{1+2d}-1\right)\log^2(x)}{\left(1-x^{\frac{1}{2}+d}\right)^6}\\
            \geq&-4\left(\frac{100}{99}\right)^6\log^2(x)x^{-(1+2d)}\geq-5\log^2(x)x^{-(1+2d)}
        \end{split}
    \end{equation*}
    for $\frac{\pi}{4}\leq\varphi\leq\frac{\pi}{2}$ and $x\geq100$. Furthermore, we assumed $a_3\leq2$. Therefore, we obtain that
    \begin{equation}\label{f3+f4 phi deriv pos second}
        \begin{split}
            \frac{\partial}{\partial\varphi}\left(\frac{d^2a_2}{2}f_3(x,\varphi)+\frac{a_3}{2}f_4(x,\varphi)\right)\geq&0
        \end{split}
    \end{equation}
    for $\frac{\pi}{4}\leq\varphi\leq\frac{\pi}{2}$ and $x\geq100$. Altogether, we obtain from equations \eqref{q_1 def}, \eqref{f1 phi deriv pos first part}, \eqref{f2 phi deriv pos}, \eqref{f3 phi deriv pos}, and \eqref{f4 phi deriv pos first} that $\frac{\partial}{\partial\varphi}q_1(x,\varphi)\geq0$ for $0\leq\varphi\leq\frac{\pi}{4}$ and $x\geq100$. Equations \eqref{q_1 def}, \eqref{f1+f2 phi deriv pos second}, and $\eqref{f3+f4 phi deriv pos second}$ imply that $\frac{\partial}{\partial\varphi}q_1(x,\varphi)\geq0$ for $\frac{\pi}{4}\leq\varphi\leq\frac{\pi}{2}$ and $x\geq100$. Therefore, $q_1(x,\varphi)$ is maximized in the interval $\varphi\in\left[\frac{\pi}{2},\pi\right]$ if $x\geq100$. Now we may consider the derivatives of $f_i(x,\varphi)$ with respect to $x$ and $\varphi\in\left[\frac{\pi}{2},\pi\right]$. For $f_1$ we obtain
    \begin{equation}\label{f1}
        \begin{split}
            \frac{\partial}{\partial x}f_1(x,\varphi)=&
            \frac{x^{3d}\cos(\varphi)-(1+2d)\left(x^{\frac{1}{2}+5d}+x^{d-\frac{1}{2}}\right)+\left(\frac{1}{2}+2d\right)\left(x^{\frac{1}{2}+4d}+x^{2d-\frac{1}{2}}\right)}{\left(1-2x^{\frac{1}{2}+2d}\cos(\varphi)+x^{1+4d}\right)\left(1-x^{\frac{1}{2}+d}\cos(\varphi)+x^{1+2d}\right)}\sin(\varphi)\\
            \leq&-
            \frac{(1+2d)\left(x^{\frac{1}{2}+5d}+x^{d-\frac{1}{2}}\right)-\left(\frac{1}{2}+2d\right)\left(x^{\frac{1}{2}+4d}+x^{2d-\frac{1}{2}}\right)}{\left(1-2x^{\frac{1}{2}+2d}\cos(\varphi)+x^{1+4d}\right)\left(1-x^{\frac{1}{2}+d}\cos(\varphi)+x^{1+2d}\right)}\sin(\varphi)\leq0
        \end{split}
    \end{equation}
    for $x\geq100$, when $\varphi\in\left[\frac{\pi}{2},\pi\right]$. Similarly, we find that

    \begin{equation}\label{f2}
        \begin{split}
            \frac{\partial}{\partial x}f_2(x,\varphi)&=
            \frac{1-((1+2d)\log(x)-1)x^{1+2d}}{x \left(1-2x^{\frac{1}{2}+d}\cos\left({\varphi}\right)+x^{1+2d}\right)^{2}}\\
            +&\frac{\left((\frac{1}{2}+d)\log(x)-1\right)x^{\frac{3}{2}+3d}+2x^{1+2d}\cos(\varphi)+\left((\frac{1}{2}+d)\log(x)-3\right)x^{\frac{1}{2}+d}}{x \left(1-2x^{\frac{1}{2}+d}\cos\left({\varphi}\right)+x^{1+2d}\right)^{2}}\cos(\varphi)\\
            &\leq\frac{\left((\frac{1}{2}+d)\log(x)-1\right)x^{\frac{3}{2}+3d}-2x^{1+2d}+\left((\frac{1}{2}+d)\log(x)-3\right)x^{\frac{1}{2}+d}}{x \left(1-2x^{\frac{1}{2}+d}\cos\left({\varphi}\right)+x^{1+2d}\right)^{2}}\cos(\varphi)\leq0
        \end{split}
    \end{equation}
    for $\frac{\pi}{2}\leq\varphi\leq\pi$ and $x\geq100$. For $f_3$ we obtain
    \begin{equation}\label{f3}
        \begin{split}
            \frac{\partial}{\partial x}f_3(x,\varphi)=&\frac{x^{d-\frac{1}{2}} \log^2\left(x\right)\left(1+2d\right) \left(\cos^{2}(\varphi)-2\right) \left(x^{\frac{3}{2}+3d}-x^{\frac{1}{2}+d}\right)}{ \left(1 - 2 \cos\left({\varphi}\right) x^{\frac{1}{2}+d} + x^{1+2d}\right)^{3}}\\
            &+\frac{4x^{d-\frac{1}{2}} \log\left(x\right)\left(\cos^{2}\left({\varphi}\right)+1\right) \left(x^{\frac{3}{2}+3d}+x^{\frac{1}{2}+d}\right)}{ \left(1 - 2 \cos\left({\varphi}\right) x^{\frac{1}{2}+d} + x^{1+2d}\right)^{3}}\\
            &+\frac{x^{d - \frac{1}{2}} \log\left(x\right)\cos(\varphi) \left(\left(1+2d\right) \left(x^{2+4d}-1\right)\log\left(x\right) -4x^{2+4d}-24x^{1+2d}-4\right)}{2 \left(1-2\cos\left({\varphi}\right)x^{\frac{1}{2}+d}+x^{1+2d}\right)^{3}}\\
            \leq&\frac{x^{d-\frac{1}{2}} \log\left(x\right) \left(8\left(x^{\frac{3}{2}+3d}+x^{\frac{1}{2}+d}\right)-(1+2d)\log(x)\left(x^{\frac{3}{2}+2d}-x^{\frac{1}{2}+d}\right)\right)}{ \left(1 - 2 \cos\left({\varphi}\right) x^{\frac{1}{2}+d} + x^{1+2d}\right)^{3}}\\
            \leq&0
        \end{split}
    \end{equation}
    for $\frac{\pi}{2}\leq\varphi\leq\pi$ and $x\geq100$. Lastly, we obtain
    \begin{equation}\label{f4}
        \begin{split}
            \frac{\partial}{\partial x}f_4(x,\varphi)=&-\frac{\left(\frac{1}{2}+d\right)\sin(\varphi)\log^2\left(x\right) \left(2\cos(\varphi) \left(x^{\frac{3}{2}+3d}+x^{\frac{1}{2}+d}\right) +x^{2+4d}-6x^{1+2d}+1\right) }{x^{\frac{1}{2}-d}\left(1- 2x^{\frac{1}{2}+d}\cos(\varphi) + x^{1+2d}\right)^{3}}\\
            &-\frac{\sin(\varphi) \log\left(x\right) \left(4\cos(\varphi)\left(x^{\frac{3}{2}+3d}-x^{\frac{1}{2}+d}\right)- 2\left(x^{2+4d}-1\right)\right)}{x^{\frac{1}{2}-d}\left(1- 2x^{\frac{1}{2}+d}\cos(\varphi) + x^{1+2d}\right)^{3}}\\
            \leq&-\frac{\sin(\varphi)\log(x) \left(\left(\left(\frac{1}{2}+d\right)\log(x)-2\right)x^{2+4d}-((1+2d)\log(x)+4)x^{\frac{3}{2}+3d}\right)}{x^{\frac{1}{2}-d}\left(1- 2x^{\frac{1}{2}+d}\cos(\varphi) + x^{1+2d}\right)^{3}}\\
            &-\frac{\sin(\varphi) \log\left(x\right) \left(-6\left(\frac{1}{2}+d\right)\log(x)x^{1+2d}+((1+2d)\log(x)-4)x^{\frac{1}{2}+d}\right)}{x^{\frac{1}{2}-d}\left(1- 2x^{\frac{1}{2}+d}\cos(\varphi) + x^{1+2d}\right)^{3}}\\
            &-\frac{\sin(\varphi)\log(x)\left(\left(\frac{1}{2}+d\right)\log(x)+2\right)}{x^{\frac{1}{2}-d}\left(1- 2x^{\frac{1}{2}+d}\cos(\varphi) + x^{1+2d}\right)^{3}}\\
            \leq&0
        \end{split}
    \end{equation}
    for $\frac{\pi}{2}\leq\varphi\leq\pi$ and $x\geq100$. Notice that because of equations \eqref{f1},\eqref{f2},\eqref{f3}, and \eqref{f4} we know that the functions $f_i(x,\varphi)$ are decreasing in $x$ when they are maximized in $\varphi$ and $x\geq100$. With the numerical verification of \eqref{bin eingeschlafen} from before, we have now established this inequality for all integer $\alpha\geq2$ and $m\geq1$. With equation \eqref{bin eingeschlafen} and the fact that over each prime can only lie $n_K$ prime ideals in $K$ \cite[proposition 8.2]{bib7} we deduce that
    \begin{equation}\label{equation (2.16)}
        E_u(\z)(T,d)\leq n_K\sum_{p\in\mathbb{P}}\max_{\varphi}q_1(p,\varphi).
    \end{equation}
    So we only need to evaluate the remaining prime series. To this end, we first find a majorant, which we evaluate by its connection to the Riemann zeta function. We will use this to evaluate the tail of the infinite series in \eqref{equation (2.16)}. We evaluate the contribution of the remaining terms below the cut-off numerically. We again consider the individual terms of $q_1$ separately. For the first term, we obtain that
    \begin{equation}\label{u1}
        \begin{split}
            &\frac{4}{\pi}\arctan\left(\frac{\sin(\varphi)}{x^{\frac{1}{2}+d}-\cos(\varphi)}\right)-\frac{2}{\pi}\arctan\left(\frac{\sin(\varphi)}{x^{\frac{1}{2}+2d}-\cos(\varphi)}\right)\\
            \leq&\frac{4}{\pi}\arctan\left(x^{-\frac{1}{2}-d}\right)-\frac{2}{\pi}\arctan\left(x^{-\frac{1}{2}-2d}\right)\\
            \leq&\frac{2}{\pi}\left(\log\left(1+x^{-\frac{1}{2}-d}\right)-\log\left(1-x^{-\frac{1}{2}-d}\right)\right)-\frac{1}{\pi}\left(\log\left(1+x^{-\frac{1}{2}-2d}\right)-\log\left(1-x^{-\frac{1}{2}-2d}\right)\right)\\
            =:&u_1(x)
        \end{split}
    \end{equation}
    for $\frac{\pi}{2}\leq\varphi\leq\pi$ and $x\geq100$. The first inequality follows from observing that the $\varphi$-derivative of $f_1$ given in \eqref{f_1 deriv negative in (pi/2,pi)} is negative for $\varphi\in\left[\frac{\pi}{2},\pi\right]$ and large $x$. The second inequality follows from the taylor series for the $\arctan(\cdot)$ and the $\log(\cdot)$ functions. For the second term, we find that
    \begin{equation}\label{u2}
        \begin{split}
            \frac{da_1}{2}\frac{\log(x)(1-x^{\frac{1}{2}+d}\cos(\varphi))}{1-2x^{\frac{1}{2}+d}\cos(\varphi)+x^{1+2d}}\leq\frac{da_1}{2}\frac{\log(x)}{1+x^{\frac{1}{2}+d}}=:u_2(x),
        \end{split}
    \end{equation}
    when $\frac{\pi}{2}\leq\varphi\leq\pi$ and $x\geq100$. Lastly, we may estimate the remaining terms by
    \begin{equation}\label{u3}
        \begin{split}
            \frac{d^2a_2}{2}f_3(x,\varphi)+&\frac{a_3}{2}f_4(x,\varphi)\\
            =&\frac{d^2a_2}{2}\re\left(\frac{\log^2(x)x^{\frac{1}{2}+d+\frac{i\varphi}{\log(x)}}}{\left(1-x^{\frac{1}{2}+d+\frac{i\varphi}{\log(x)}}\right)^2}\right)+\frac{a_3}{2}\im\left(\frac{\log^2(x)x^{\frac{1}{2}+d+\frac{i\varphi}{\log(x)}}}{\left(1-x^{\frac{1}{2}+d+\frac{i\varphi}{\log(x)}}\right)^2}\right)\\
            \leq& c\left|\frac{\log^2(x)x^{\frac{1}{2}+d+\frac{i\varphi}{\log(x)}}}{\left(1-x^{\frac{1}{2}+d+\frac{i\varphi}{\log(x)}}\right)^2}\right|= c\frac{\log^2(x)x^{\frac{1}{2}+d}}{1-2\cos(\varphi)x^{\frac{1}{2}+d}+x^{1+2d}}\leq c\frac{\log^2(x)x^{\frac{1}{2}+d}}{1+x^{1+2d}}\\
            \leq&c\left(\frac{1}{2}\frac{\log^2(x)x^{\frac{1}{2}+d}}{(1-x^{\frac{1}{2}+d})^2}+\frac{1}{2}\frac{\log^2(x)x^{\frac{1}{2}+d}}{(1+x^{\frac{1}{2}+d})^2}\right)=:u_3(x),
        \end{split}
    \end{equation}
    where $c=c_{d,a_2,a_3}:=\max_{s\in\C}\frac{\frac{d^2a_2}{2}\re(s)+\frac{a_3}{2}\im(s)}{|s|}=\frac{1}{2}\sqrt{(d^2a_2)^2+a_3^2}$, as well as $\frac{\pi}{2}\leq\varphi\leq\pi$ and $x\geq100$. We use equations \eqref{zeta definitions}, \eqref{u1}, \eqref{u2}, and \eqref{u3} to deduce that
    \begin{equation*}
        \begin{split}
            \sum_{p\in\mathbb{P},p>M}\max_{\varphi}q_1(p,\varphi)\leq&\sum_{p\in\mathbb{P},p>M}u_1(p)+u_2(p)+u_3(p)\\
            =&\frac{2}{\pi}\left(2\log\zeta\left(\frac{1}{2}+d\right)-\log\zeta(1+2d)\right)\\
            &-\frac{1}{\pi}\left(2\log\zeta\left(\frac{1}{2}+2d\right)-\log\zeta(1+4d)\right)\\
            &+\frac{da_1}{2}\left(2\frac{\zeta'}{\zeta}(1+2d)-\frac{\zeta'}{\zeta}\left(\frac{1}{2}+d\right)\right)\\
            &+c\left(\left(\frac{\zeta''}{\zeta}-\left(\frac{\zeta'}{\zeta}\right)^2\right)\left(\frac{1}{2}+d\right)-2\left(\frac{\zeta''}{\zeta}-\left(\frac{\zeta'}{\zeta}\right)^2\right)(1+2d)\right)\\
            &-\sum_{p\in\mathbb{P},p\leq M} u_1(p)+u_2(p)+u_3(p)\\
            \leq&1.86298,
        \end{split}
    \end{equation*}
    where $M=10^5$. Computing the remaining terms in equation \eqref{equation (2.16)} numerically yields
    \begin{equation*}
        \sum_{p\in\mathbb{P},p\leq M}\max_\varphi q_1(p,\varphi)\leq 6.04758.
    \end{equation*}
    Altogether, we obtain that $E_u(\z)(T,d)\leq7.911n_K$. We obtain the lower bound for $E_l(\z)(T,d)$ from the upper bound by observing that it is exactly the odd part of $E_u(\z)(T,d)$ minus the even part of $E_u(\z)(T,d)$. We have that
    \begin{equation*}
        \begin{split}
            E_l(\z)(T)=\sum_{\mathfrak{p}\subset\mathcal{O}_K}&\frac{4}{\pi}\arctan\left(\frac{\sin\varphi}{\alpha^{\frac{1}{2}+d}-\cos\varphi}\right)-\frac{2}{\pi}\arctan\left(\frac{\sin\varphi}{\alpha^{\frac{1}{2}+2d}-\cos\varphi}\right)\\
            &-\frac{da_1}{2}\log(\alpha)\frac{1-\alpha^{\frac{1}{2}+d}\cos\varphi}{1-2\alpha^{\frac{1}{2}+d}\cos\varphi+\alpha^{1+2d}}\\
            &+\frac{d^2a_2}{2}(\log(\alpha))^2\frac{\alpha^{\frac{1}{2}+d}((1+\alpha^{1+2d})\cos\varphi-2\alpha^{\frac{1}{2}+d})}{(1-2\alpha^{\frac{1}{2}+d}\cos\varphi+\alpha^{1+2d})^2}\\
            &-\frac{a_3}{2}(\log(\alpha))^2\frac{\alpha^{\frac{1}{2}+d}(1-\alpha^{1+2d})\sin\varphi}{(1-2\alpha^{\frac{1}{2}+d}\cos(\varphi)+\alpha^{1+2d})^2}\\
            =\sum_{\mathfrak{p}\subset\mathcal{O}_K}&q_2(\alpha,\varphi)\leq\sum_{\mathfrak{p}\subset\mathcal{O}_K}\max_{\varphi}q_2(\alpha,\varphi),
        \end{split}
    \end{equation*}
    where $\alpha:=N\mathfrak{p}$ and $\varphi:=T\log(N\mathfrak{p})$. Then
    \begin{equation*}
        \min_\varphi q_2(\alpha,\varphi)=-\max_\varphi(-q_2(\alpha,\varphi))=q_2(\alpha,\varphi_0)=-q_1(\alpha,-\varphi_0)\geq-\max_\varphi q_1(\alpha,\varphi),
    \end{equation*}
    where $\varphi_0$ is a maximizer of $-q_2(\alpha,\varphi)$. Hence, the minimum of $q_2(\alpha,\varphi)$ is lower bounded by minus the maximum of $q_1(\alpha,\varphi)$ and we may lower bound $E_l(\z)(T)$ by the negative of the upper bound for $E_u(\z)(T)$. This concludes the proof of the lemma.
\end{proof}
Combining Lemmas \ref{bounds for s(s-1)}, \ref{bounds for d_K}, \ref{bounds for the gamma factor} and \ref{bounds for the zeta term} yields the following inequalities. We have
\begin{equation*}
    \begin{split}
        E_u(\xi_K)(T)\leq&\frac{T}{\pi}\log\left(d_K\left(\frac{T}{2\pi e}\right)^{n_K}\right)+\frac{da_1}{4}\log\left(d_K\left(\frac{T}{2\pi}\right)^{n_K}\right)+8.003n_K+2.001,\\
        E_l(\xi_K)(T)\geq&\frac{T}{\pi}\log\left(d_K\left(\frac{T}{2\pi e}\right)^{n_K}\right)-\frac{da_1}{4}\log\left(d_K\left(\frac{T}{2\pi}\right)^{n_K}\right)-8.161n_K+1.547
    \end{split}
\end{equation*}
for $T\geq1$. Plugging in $d=0.722$ and $a_1=1.07$ then yields
\begin{equation*}
    \begin{split}
        \left|N_K(T)-\frac{T}{\pi}\log\left(d_K\left(\frac{T}{2\pi e}\right)^{n_K}\right)\right|
        \leq&0.194\log\left(d_K\left(\frac{T}{2\pi}\right)^{n_K}\right)+8.161n_K+2.001\\
        =&0.194(\log d_K +n_K\log T)+8.161n_K+2.001
    \end{split}
\end{equation*}
for $T\geq1$, where $T$ satisfies $\z(x+iT)\neq0$ for all $x\in\R$. Theorem \ref{mastertheorem} follows from this and equation \eqref{idk lalala}.
\begin{remark}
    Table \eqref{table: b} contains different choices for $C_1,C_2,C_3$ such that 
    \begin{equation}\label{estimate of N_K(T)}
        \left|N_K(T)-\frac{T}{\pi}\log\left(d_K\left(\frac{T}{2\pi e}\right)^{n_K}\right)\right|\leq C_1\log(T)+C_2n_K+C_3
    \end{equation}
    for the different tuples $(d,a_1,a_2,a_3)$ from \eqref{table: a}.
    \begin{table}[h!]
    \centering
    \begin{tabular}{|c|c|c|c|}
    \hline
    $i$ & $C_1$ & $C_2$ & $C_3$ \\ [0.5ex] 
    \hline
    1 & $0.2$ & $6.803$ & $2.001$\\
    2 & $0.24$ & $4.155$ & $2.001$\\
    3 & $0.28$ & $3.055$ & $2.001$\\
    4 & $0.32$ & $2.447$ & $2.001$\\
    5 & $0.36$ & $2.061$ & $2.001$\\
    6 & $0.4$ & $1.792$ & $2.001$\\
    [1ex]  
    \hline
    \end{tabular}
    \caption{Further admissible choices for $(C_1,C_2,C_3)$ in equation \eqref{estimate of N_K(T)}}
    \label{table: b}
    \end{table}\newpage
    We find that these improve on the results in table \eqref{table d}\cite{bib1} in every constant. Table \eqref{table: g} compares the bounds from this method to the bounds in \cite{bib1} at the example of the Dedekind zeta-function associated with the field $K=\mathbb{Q}(i)$ with $n_K=2$ and absolute discriminant $d_K=4$.
    \begin{table}[h!]
    \centering
    \begin{tabular}{|c|c|c|}
    \hline
    $T$ & table \eqref{table d} \cite{bib1} & table \eqref{table: b} \\ [0.5ex] 
    \hline
    $10$ & $12.285$ & $7.982$ \\
    $10^3$ & $15.210$ & $11.596$ \\
    $10^6$ & $19.298$ & $16.181$ \\
    $10^9$ & $23.386$ & $20.105$ \\
    $10^{12}$ & $27.474$ & $23.907$ \\
    $10^{15}$ & $31.488$ & $27.223$ \\
    [1ex]  
    \hline
    \end{tabular}
    \caption{Comparison of the results from this paper with \cite{bib1}}
    \label{table: g}
    \end{table}
\end{remark}
\begin{proof}[Proof of Corollary 1.2]
    We apply Theorem \ref{mastertheorem} in the case where $K=\mathbb{Q}$, yielding that
    \begin{equation*}
        \left|N_{\mathbb{Q}}(T)-\frac{T}{\pi}\log\left(\frac{T}{2\pi e}\right)\right|\leq0.194\log T+10.162.
    \end{equation*}
    Now, we observe that due to the complex symmetry of the zeros of $\zeta(s)$ and because $\zeta(\sigma)\neq0$ for all real $\sigma\in[0,1]$, $N_{\mathbb{Q}}(T)=2N(T)$ for all $T\geq0$. Hence,
    \begin{equation}\label{asymptotic bound}
        \left|N(T)-\frac{T}{2\pi}\log\left(\frac{T}{2\pi e}\right)\right|\leq0.097\log T+5.081
    \end{equation}
    for $T\geq1$, as desired.
\end{proof}
\begin{remark}
    The following table contains different choices for $C_1,C_2$ such that 
    \begin{equation}\label{estimate for N(T)}
        \left|N(T)-\frac{T}{2\pi}\log\left(\frac{T}{2\pi e}\right)\right|\leq C_1\log(T)+C_2
    \end{equation}
    for the different tuples $(d,a_1,a_2,a_3)$ from \eqref{table: a}.
    \begin{table}[h!]
    \centering
    \begin{tabular}{|c|c|c|c|}
    \hline
    $i$ & $C_1$ & $C_2$ & optimal range\\ [0.5ex] 
    \hline
    1 & $0.1$ & $4.402$ & $\left(e^{66.2},e^{226,\overline{3}}\right)$\\
    2 & $0.12$ & $3.078$ & $\left(e^{27.5},e^{66.2}\right)$\\
    3 & $0.14$ & $2.528$ & $\left(e^{15.2},e^{27.5}\right)$\\
    4 & $0.16$ & $2.224$ & $\left(e^{9.65},e^{15.2}\right)$\\
    5 & $0.18$ & $2.031$ & $\left(e^{6.75},e^{9.65}\right)$\\
    6 & $0.2$ & $1.896$ & $\left(1,e^{6.75}\right)$ \\
    [1ex]  
    \hline
    \end{tabular}
    \caption{Admissible choices for $(C_1,C_2)$ for equation \eqref{estimate for N(T)} with the corresponding ranges where each estimate is best.}
    \label{table: f}
    \end{table}\newpage
    Beyond $e^{226,\overline{3}}$, the bound in equation \eqref{asymptotic bound} gives the best result from this method. These bounds improve on the bounds in \cite{bib11} for $T\geq4.1$.
\end{remark}
\begin{remark}
    Notice that this method to estimate the error in the zero counting function of a $L$-function predominantly depends on Lemma \ref{bounds for f}. However, to apply this lemma, we simply require that $\frac{1}{2}+iT-\rho$ is within the critical strip. Hence, the method is principle amenable to any $L$-function in the Selberg class.
\end{remark}
\vspace{1em}
\begin{center}
    {\Large {Acknowledgment}}\\
The author thanks Valentin Blomer for his guidance and suggestions, which helped simplify the argument. The author also thanks the referee for their careful reading of the manuscript and for pointing out a number of mistakes from the first draft of the paper.
\end{center}
\printbibliography

  \begin{flushleft}\footnotesize

  \textsc{Mathematisches Institut, Endenicher Allee 60, 53115 Bonn, Germany}\par\nopagebreak
  \textit{E-mail address}: \texttt{victor.amberger@gmx.de}
\end{flushleft}

\end{document}